\setlist{nolistsep}
\renewenvironment{thebibliography}[1]{
  \begin{oldthebibliography}{#1}
    \setlength{\itemsep}{0em}
    \setlength{\parskip}{0em}
}
{
  \end{oldthebibliography}
}
\newtheorem{theorem}{Theorem}[section]
\newtheorem{lemma}[theorem]{Lemma}
\newtheorem{corollary}[theorem]{Corollary}
\newtheorem{definition}[theorem]{Definition}
\newtheorem{remark}[theorem]{Remark}
\newtheorem{result}[theorem]{Result}
\newenvironment{proof}{\noindent{\bf Proof}\hspace{0.5em}}
    { \null  \hfill $\square$ \par}
\newcommand\B{{\mathscr B}}
\newcommand\U{{\cal U}}
\newcommand{\X}{\mathcal X}
\newcommand{\R}{\mathcal R}
\newcommand\C{{\cal C}}
\newcommand\D{{\cal D}}
\newcommand\V{{\cal V}}
\newcommand\N{{\cal N}}
\renewcommand\L{{\mathscr L}}
\renewcommand{\S}{\mathcal S}
\renewcommand{\P}{\mathcal P}
\newcommand{\K}{\mathcal K}
\newcommand{\Q}{\mathscr Q}
\newcommand{\si}{\Sigma_\infty}
\newcommand{\abb}{{\cal A(\cal S)}}
\newcommand{\pbb}{{\cal P(\cal S)}}
\newcommand{\li}{\ell_\infty}
\newcommand{\takeaway}{\backslash}
\renewcommand\setminus{\backslash}
\newcommand{\st}{\,|\,}
\newcommand\PGL{{\rm PGL}}
\newcommand\GF{{\rm GF}}
\newcommand\PG{{\rm PG}}
\newcommand{\Label}{\label}
    \newcommand{\Fq}{\mathbb F_{q}}
\newcommand{\Fqq}{\mathbb F_{q^2}}
\newcommand{\plus}{\raisebox{.2\height}{\scalebox{.5}{+}}}
\newcommand{\Cplus}{\C^{{\rm \pmb\plus}}}
\newcommand{\Pbar}{{P}}
\newcommand{\Qbar}{{Q}}
\newcommand{\Pqbar}{P^q}
\newcommand{\Qqbar}{Q^q}
\newcommand{\Tbar}{{T}}
\newcommand{\Tqbar}{T^q}
\newcommand{\Konestar}{{\K}_1 ^{\mbox{\tiny\ding{73}}}}
\newcommand{\Ksstar}{{\K}_s^{\mbox{\tiny\ding{73}}}}
\newcommand{\Conestar}{{\C}_1 ^{\mbox{\tiny\ding{73}}}}
\newcommand{\Cqtwostar}{{\C}_{q^2}^{\mbox{\tiny\ding{73}}}}
\newcommand{\gstar}{g^{\mbox{\tiny\ding{72}}}}
\newcommand{\gqstar}{{g^q}^{\mbox{\tiny\ding{72}}}}
\newcommand{\Cstarstar}{[\C]^{\mbox{\tiny\ding{72}}}}
\newcommand{\sistar}{{\Sigma}_\infty^{\mbox{\tiny\ding{73}}}}
\newcommand{\sistarstar}{{\Sigma}_\infty^{\mbox{\tiny\ding{72}}}}
\newcommand{\Bstarstar}{[\B]^{\mbox{\tiny\ding{72}}}}
\newcommand{\Tstarstar}{[T]^{\mbox{\tiny\ding{72}}}}
\newcommand{\Cplusstarstar}{[\Cplus]^{\mbox{\tiny\ding{72}}}}
\newcommand{\PiCstar}{{\PiC}^{\mbox{\tiny\ding{73}}}}
\newcommand{\Cstar}{{[\C]^{\mbox{\tiny\ding{73}}}}}
\newcommand{\Qonestar}{{\Q}_\infty^{\mbox{\tiny\ding{73}}}}
\newcommand{\Qtwostar}{{\Q}_0^{\mbox{\tiny\ding{73}}}}
\newcommand{\Qtstar}{{\Q}_t ^{\mbox{\tiny\ding{73}}}}
\newcommand{\Qonestarstar}{{\Q}_\infty^{\mbox{\tiny\ding{72}}}}
\newcommand{\Qtwostarstar}{{\Q}_0^{\mbox{\tiny\ding{72}}}}
\newcommand{\newQonestar}{{\Q}^{\mbox{\tiny\ding{73}}}_1}
\newcommand{\newQtwostar}{{\Q}^{\mbox{\tiny\ding{73}}}_2}
\newcommand{\newQzerostar}{{\Q}^{\mbox{\tiny\ding{73}}}_0}
\newcommand{\Tstar}{{[T]^{\mbox{\tiny\ding{73}}}}}
\newcommand{\Lstar}{{[L]^{\mbox{\tiny\ding{73}}}}}
\newcommand{\bstar}{{[b]^{\mbox{\tiny\ding{73}}}}}
\renewcommand{\star}{{^{\mbox{\tiny\ding{73}}}}}
\newcommand{\blackstar}{{^{\mbox{\tiny\ding{72}}}}}
\newcommand{\alphastar}{{\alpha{\mbox{\raisebox{.9\height}{\scalebox{1}{{\tiny\ding{73}}}}}}}}
\newcommand{\Bstar}{{[\B]^{\mbox{\tiny\ding{73}}}}}
\newcommand{\Cplusstar}{[\Cplus]^{\mbox{\tiny\ding{73}}}}
\newcommand{\Pstar}{{[P]^{\mbox{\tiny\ding{73}}}}}
\newcommand{\Qptstar}{{[Q]^{\mbox{\tiny\ding{73}}}}}
\newcommand{\Pstarstar}{{[P]^{\mbox{\tiny\ding{72}}}}}
\newcommand{\Qptstarstar}{{[Q]^{\mbox{\tiny\ding{72}}}}}
\newcommand{\Kstarstar}{{\K^{\mbox{\tiny\ding{72}}}}}
\newcommand{\Nstar}{{\N^{\mbox{\tiny\ding{73}}}}}
\newcommand{\Nonestar}{{\N}_1^{\mbox{\tiny\ding{73}}}}
\newcommand{\Nstarstar}{{\N^{\mbox{\tiny\ding{72}}}}}
\newcommand{\Vstar}{{\V^{\mbox{\tiny\ding{73}}}}}
\newcommand{\Qstar}{{\Q^{\mbox{\tiny\ding{73}}}}}
\newcommand{\mstar}{{m^{\mbox{\tiny\ding{73}}}}}
\newcommand{\mathscrOstar}{{[{\mathscr O}]^{\mbox{\tiny\ding{73}}}}}
\newcommand{\mathscrOstarstar}{{[{\mathscr O}]^{\mbox{\tiny\ding{72}}}}}
\newcommand{\gPQ}{PQ^q}
\newcommand{\gQP}{P^qQ}
\newcommand{\gAB}{AB^q}
\newcommand{\mP}{m_{\mbox{\raisebox{-.25\height}{\scalebox{.55}{{\boldmath{$ \!P$}}}}}}}
\newcommand{\PiN}{\Pi_{\mbox{\raisebox{-.1\height}{\scalebox{.65}{{\boldmath{$\N$}}}}}}}
\newcommand{\PiC}{\Pi_{\mbox{\raisebox{-.1\height}{\scalebox{.65}{{\boldmath{$\C$}}}}}}}
\newcommand{\gPiQi}{P_iQ_i^q}
\newcommand{\elllP}{\ell_{\mbox{\raisebox{-.15\height}{\scalebox{.55}{{{$\!P$}}}}}}}
\newcommand{\ellP}{PP^q}
\begin{document}

\title{Conics in Baer subplanes}

\author{S.G. Barwick, Wen-Ai Jackson and Peter Wild}
\date{}
%
%
%
\maketitle

{\small
Keywords: Bruck-Bose representation, Baer subplanes, conics, subconics

AMS code: 51E20}


\begin{abstract} This article studies conics and subconics of $\PG(2,q^2)$ and their representation in the Andr\'e/Bruck-Bose 
setting in $\PG(4,q)$. In particular, we investigate their relationship with the transversal lines of the regular spread. 
 The main result is to show that a conic in a tangent Baer subplane of $\PG(2,q^2)$  corresponds in $\PG(4,q)$ to a normal rational curve that meets the transversal lines of the regular spread. Conversely, every 3 and 4-dimensional normal rational curve in $\PG(4,q)$ that meets the transversal lines of the regular spread  corresponds to  a conic in a tangent Baer subplane of $\PG(2,q^2)$. 
\end{abstract}

\section{Introduction}

This article investigates the representation of conics and subconics of $\PG(2,q^2)$ in the 
Bruck-Bose representation in $\PG(4,q)$. The Bruck-Bose representation of $\PG(2,q^2)$  uses a regular spread $\S$ in the hyperplane at infinity of $\PG(4,q)$. The regular spread $\S$ has two unique transversal lines $g,g^q$ in the quadratic extension $\PG(4,q^2)$. There are several known characterisations of objects of $\PG(4,q)$ in terms of their relationship with these transversal lines. 
Firstly, a conic $\C$ in $\PG(4,q)$ corresponds to a Baer subline of $\PG(2,q^2)$ iff  the extension of $\C$ to a conic of $\PG(4,q^2)$ contains a point of $g$ and a point of $g^q$, see   \cite{CasseQuinn2002}.  A ruled cubic surface $\V$ in $\PG(4,q)$ corresponds to a Baer subplane of $\PG(2,q^2)$ iff the extension of $\V$ to $\PG(4,q^2)$ contains $g$ and $g^q$, see  \cite{CasseQuinn2002}. Further, an orthogonal cone $\U$ corresponds to a classical unital of $\PG(2,q^2)$ iff the extension of $\U$ to $\PG(4,q^2)$ contains $g$ and $g^q$, see   \cite{mets97}. Hence the 
interaction  of certain objects with the transversals of $\S$  is intrinsic to their characterisation in $\PG(2,q^2)$.
In this article we study conics and subconics of $\PG(2,q^2)$, and determine their relationship with the transversals of $\S$ in the Bruck-Bose setting in  $\PG(4,q)$.  In particular, we characterise normal rational curves of $\PG(4,q)$ whose extension meets the transversals as subconics of $\PG(2,q^2)$. 

The article is set out as follows. Section~\ref{sec:background} introduces the necessary background and proves  some preliminary results. In particular, in order to study how objects of the Bruck-Bose representation relate to the transversals of the regular spread $\S$, we formally define in Section~\ref{sec:def-special} the notion of special sets in $\PG(4,q)$.  Further, in Section~\ref{sec:partition}, we consider a Baer subplane $\B$ tangent to $\li$, and give a geometric construction via $\PG(4,q)$ that partitions the affine points of $\B$ into $q$ conics, one of which is degenerate.  
 In Section~\ref{sec:special-other-Baer}, we discuss how the notion of specialness relates to the known Bruck-Bose representation of Baer sublines and Baer subplanes. 
 
 In Section~\ref{sec:adult-conic}, we investigate non-degenerate  conics of $\PG(2,q^2)$ in the $\PG(4,q)$  Bruck-Bose representation. 
In particular, we investigate the corresponding structure in the quadratic extension to $\PG(4,q^2)$. We  show that in $\PG(4,q^2)$, the  (extended) structure corresponding to a non-degenerate conic $\mathcal O$ is the intersection of two quadrics which meet $g$ in the two points (possibly repeated or in an extension) corresponding to $\mathscr O\cap\li$.

 In Section~\ref{sec:subconics} we characterise the Bruck-Bose representation of  conics contained in Baer subplanes. In $\PG(2,q^2)$, let $\B$ be a Baer subplane  tangent to $\li$,  and $\C$  a non-degenerate conic contained in $\B$. We show that in $\PG(4,q)$, $\C$ corresponds to a normal rational curve that meets the transversals of the regular spread. Conversely, we characterise every normal rational curve in $\PG(4,q)$  that meets the transversals of the regular spread as corresponding to a non-degenerate  conic in a Baer subplane of $\PG(2,q^2)$. 
 
While the proofs in Section~\ref{sec:adult-conic} are largely coordinate based, the proofs in Section~\ref{sec:subconics} use geometrical arguments.

\section{Background and Preliminary Results}\Label{sec:background}

In this section we give the necessary background, introduce the notation we use in this article, and prove a number of preliminary results.

\subsection{Conjugate points}

For $q$ a prime power, we denote the unique finite field of order $q$ by $\mathbb F_q$.
We use the phrase conjugate points in several different settings. Firstly, consider the automorphism $x\mapsto x^q$ for $x\in{{\mathbb F}_{q^r}}$, it induces an automorphic collineation of $\PG(n,q^r)$
where a point $X=(x_0,\ldots,x_n)\mapsto X^q =(x_0^q,\ldots,x_n^q)$. The points $X,X^q,\ldots,X^{q^n-1}$ are called \emph{conjugate}. Secondly, let $\B$ be a Baer subplane of $\PG(2,q^2)$, them there is a unique involutory collineation that fixes $\B$ pointwise, and we call this map \emph{conjugacy with respect to $\B$.}  
Note that  $P,Q\in\li$ are conjugate with respect to the secant Baer subplane $\B$ if and only if $P,Q$ are conjugate with respect to the Baer subline $\B\cap\li$.

\subsection{Spreads in $\PG(3,q)$}\Label{sec:spreads}


The following construction of a regular spread of $\PG(3,q)$ will be
needed, see \cite{hirs91} for
more information on spreads. Embed $\PG(3,q)$ in $\PG(3,q^2)$ and let $g$ be a line of
$\PG(3,q^2)$ disjoint from $\PG(3,q)$. 
The 
 line $g$ has a conjugate line $g^q$ with respect to the map $x\mapsto x^q$, $x\in{{\mathbb F}_{q^2}}$, and $g^q$ is also  disjoint from $\PG(3,q)$. Let $P_i$ be
a point on $g$; then the line $\langle P_i,P_i^q\rangle$ meets
$\PG(3,q)$ in a line. As $P_i$ ranges over all the points of  $g$, we obtain 
$q^2+1$ lines of $\PG(3,q)$ that partition $\PG(3,q)$. These lines form a
regular spread $\S$ of $\PG(3,q)$. The lines $g$, $g^q$ are called the (conjugate
skew) {\em transversal lines} of the regular spread $\S$. Conversely, given a regular spread $\S$ 
in $\PG(3,q)$,
there is a unique pair of  transversal lines in $\PG(3,q^2)$ that generate
$\S$ in this way.

\subsection[Bruck-Bose]{The Bruck-Bose representation}\Label{BBintro}

We will use the linear representation of a finite
translation plane  of dimension at most two over its kernel,
due independently to
Andr\'{e}~\cite{andr54} and Bruck and Bose
\cite{bruc64,bruc66}. 
Let $\si$ be a hyperplane of $\PG(4,q)$ and let $\S$ be a spread
of $\si$. We use the phrase {\em a subspace of $\PG(4,q)\takeaway\si$} to
  mean a subspace of $\PG(4,q)$ that is not contained in $\si$.  Consider the following incidence
structure:
the {\sl points} of $\abb$ are the points of $\PG(4,q)\takeaway\si$; the {\sl lines} of $\abb$ are the planes of $\PG(4,q)\takeaway\si$ that contain
  an element of $\S$; and {\sl incidence} in $\abb$ is induced by incidence in
  $\PG(4,q)$.
Then the incidence structure $\abb$ is an affine plane of order $q^2$. We
can complete $\abb$ to a projective plane $\pbb$; the points on the line at
infinity $\li$ have a natural correspondence to the elements of the spread $\S$. We call this the {\em Bruck-Bose representation} of $\P(\S)$ in $\PG(4,q)$. 
The projective plane $\pbb$ is the Desarguesian plane $\PG(2,q^2)$ if and
only if $\S$ is a regular spread of $\si\cong\PG(3,q)$ (see \cite{bruc69}). 
We use the following notation in the Bruck-Bose setting. 
\begin{itemize}
\item  $\S$ is  a regular spread with transversal lines $g,g^q$.
\item An affine point of $\PG(2,q^2)\setminus\li$ is denoted with a capital letter, $A$ say, and 
 $[A]$ denotes the corresponding point of $\PG(4,q)\setminus\si$. 
 \item A point on $\li$ in $\PG(2,q^2)$ is denoted with an  over-lined capital letter, $\bar T$ say, and the corresponding spread line is denoted $[T]$. 
 \item The points of $\li$ are in 1-1 correspondence with the points of $g$; for a point $\bar T\in\li$, we denote the corresponding point of $g$ by $T$.
\item A set of points $\X$ in $\PG(2,q^2)$ corresponds to a set of points denoted {$[\X]$} in $\PG(4,q)$. 
\end{itemize}

We will work in the extension of $\PG(4,q)$ to $\PG(4,q^2)$ and to $\PG(4,q^4)$. 
 Let $\K$ be a 
 primal of $\PG(4,q)$, 
 so $\K$ is the set of points of $\PG(4,q)$ satisfying a homogeneous equation $f(x_0,\ldots,x_4)=0$, with coefficients in $\Fq$. 
We define $\K\star$ to be the  (unique) primal   of $\PG(4,q^2)$  which is the set of points of $\PG(4,q^2)$  satisfy the same homogeneous equation $f=0$. 
Note that if $\K=\Pi$ is an $r$-dimensional subspace of $\PG(4,q)$, then $\Pi\star$ is the (unique) $r$-dimensional subspace of $\PG(4,q^2)$ containing $\Pi$. 
Further, if $\V$ is a variety of $\PG(4,q)$, so $\V$ is the intersection of primals $\K_1,\ldots,\K_s$, then we define $\V\star=\Konestar\cap\cdots\cap\Ksstar$.
Similarly, we can extend a primal $\K$ to $\PG(4,q^4)$, and we denote the resulting set by $\Kstarstar$. 
The transversals $g,g^q$ of the regular spread  $\S$ lie in $\PG(4,q^2)$, and we denote their extensions to lines of  $\PG(4,q^4)$ by $\gstar,\gqstar$ respectively.

\subsection{Ruled cubic surfaces in $\PG(4,q)$}

A ruled cubic surface $\V$ of $\PG(4,q)$ consists of a line directrix $t$, a conic directrix $\C$ lying in a plane disjoint from $t$,  and a set of $q+1$ pairwise disjoint generator lines joining the points of $t$ and $\C$ according to a projectivity $\omega\in\PGL(2,q)$. That is, let  $\theta,\phi\in\mathbb F_{q}\cup\{\infty\}$ be the non-homogeneous coordinates of $t,\C$ respectively,  and $\omega\colon(1, \theta)\mapsto(1,\phi)$, then the generators of $\V$ are the lines joining points of $t$ to the corresponding point of $\C$ under $\omega$. 
We will need the following result which shows how hyperplanes of $\PG(4,q)$ meet a ruled cubic surface.

\begin{result}\Label{3-space-meets-ruled} \cite{quinn-conic} A hyperplane of $\PG(4,q)$ meets a ruled cubic surface in  one of the following. 
\begin{itemize}
\item The line directrix; $(q^2-q)/2$ hyperplanes do this.
\item  The line directrix and one generator line; $q+1$ hyperplanes do this.
\item  The line directrix and two generator lines; $(q^2+q)/2$ hyperplanes do this.
\item A conic and a generator line; $q^3+q^2$ hyperplanes do this.
\item A twisted cubic curve (which meets the line directrix in a unique point); $q^4-q^2$ hyperplanes do this.
\end{itemize}
\end{result}

\begin{corollary}\Label{lem:tc-brs}
Let $\Pi$ be a hyperplane of $\PG(4,q)$ that meets a ruled cubic surface $\V$ in a twisted cubic $\N$. Then $\N$ meets each generator line of $\V$ in a unique point.
\end{corollary}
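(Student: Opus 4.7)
The plan is to argue that this follows almost immediately from Result~\ref{3-space-meets-ruled} combined with the fact that a twisted cubic contains no line. First I would observe that each generator line $\ell$ of $\V$ is entirely contained in $\V$, so $\ell\cap\N = \ell\cap(\V\cap\Pi)=\ell\cap\Pi$. Hence determining how $\N$ meets $\ell$ reduces to determining how the line $\ell$ meets the hyperplane $\Pi$.

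Next, since $\ell$ is a line in $\PG(4,q)$ and $\Pi$ is a hyperplane, $\ell\cap\Pi$ is either a single point or the whole of $\ell$. The second case is the one I need to rule out. If $\ell\subseteq\Pi$, then $\ell\subseteq\V\cap\Pi=\N$; but a twisted cubic in $\PG(3,q)$ contains no line (any four points of a twisted cubic are in general position, so in particular no three are collinear). This contradiction shows $\ell\cap\Pi$ is a single point, and this point lies in $\N$.

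Thus each of the $q+1$ generators contributes exactly one point of $\N$, which is the required conclusion. As a sanity check (not needed for the statement but reassuring), the resulting $q+1$ points are distinct because distinct generators of $\V$ are pairwise disjoint, and together they account for all $q+1$ points of the twisted cubic $\N$.

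The argument really has no obstacle: the only thing to be careful about is the logical direction, namely that we are given by Result~\ref{3-space-meets-ruled} that $\V\cap\Pi$ is a twisted cubic, and we use the no-line property of a twisted cubic to prevent a generator from being absorbed into $\Pi$. Everything else is a one-line incidence observation.
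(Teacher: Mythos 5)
Your proof is correct and follows essentially the same route as the paper: both arguments hinge on the observation that a generator meeting $\Pi$ (equivalently $\N$) in two points would be wholly contained in it, which is absurd. The only differences are cosmetic --- the paper derives the absurdity from Result~\ref{3-space-meets-ruled} (no hyperplane containing a generator line cuts $\V$ in a twisted cubic) and then obtains existence by counting the $q+1$ points of $\N$ against the $q+1$ pairwise disjoint generators, whereas you use the intrinsic no-three-collinear-points property of a twisted cubic for the contradiction and the fact that every line meets every hyperplane of $\PG(4,q)$ for existence; both variants are sound and yours is marginally more self-contained.
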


\begin{proof}
If $\N$ meets a generator line $\ell$ of $\V$ in two points, then the 3-space $\Pi$ containing $\N$ also contains $\ell$, which is not possible by  Result~\ref{3-space-meets-ruled}. Hence $\N$ meets each generator line in at most one point. As $\N$ contains $q+1$ points, each generator of $\V$ contains a unique point of $\N$. 
\end{proof}

There are two ways to extend the ruled cubic surface to $\PG(4,q^2)$, we show that they are equivalent. 
The ruled cubic surface $\V$ is a variety whose points are  the exact intersection of three quadrics, $\V=\Q_0\cap\Q_1\cap\Q_2$ (see for example \cite{FFA}). So extending this variety to  $\PG(4,q^2)$ yields $\Vstar=\newQzerostar\cap\newQonestar\cap\newQtwostar$. Alternatively, we can 
  extend $\V$ to $\PG(4,q^2)$ as in \cite{CasseQuinn2002}: namely extending the line directrix $t$ and conic directrix $\C$ to $\PG(4,q^2)$ by taking $\theta,\phi\in\mathbb F_{q^2}\cup\{\infty\}$, and extending the projectivity $\omega$ to act over $\mathbb F_{q^2}$. We denote this extension by $\V'$, thus $\V'$ is the ruled cubic surface with line directrix $t\star$, conic directrix $\C\star$, and ruled using the  (extended) projectivity $\omega$. 
We show that these two extensions $\Vstar$, $\V'$ are the same. The surface  $\V$ contains exactly $q^2$ conics $\C_1,\ldots,\C_{q^2}$, and these conics cover each point of $\V\setminus t$ $q$ times  (see \cite{UnitalBook} for more details). Hence both sets $\Vstar,\V'$ contain the extended conics $\Conestar,\ldots,\Cqtwostar$. Moreover,  these conics together with  $t\star$ cover all the points of $\V'$, and so $\Vstar$ contains $\V'$. However, $\Vstar$ is the intersection of three quadrics over $\mathbb F_{q^2}$, whose intersection over $\mathbb F_{q}$ is a ruled cubic surface. By \cite{BV}, all ruled cubic surfaces are projectively equivalent,  hence $\Vstar$ and $\V'$ are the same ruled cubic surface of $\PG(4,q^2)$.

\subsection{Coordinates in Bruck-Bose}\Label{sec:background-coord}

We now show how the coordinates of points in
$\PG(2,q^2)$ relate to the coordinates of the corresponding points
in the Bruck-Bose representation in $\PG(4,q)$. See \cite[Section 3.4]{UnitalBook} for more details. 
Let $\tau$ be a primitive element in ${{\mathbb F}_{q^2}}$ with primitive
polynomial
$
x^2-t_1x- t_0
$
 over ${{\mathbb F}_q}$. Then every
element $\alpha\in{{\mathbb F}_{q^2}}$ can be uniquely written as
$\alpha=a_0+a_1\tau$ with $a_0,a_1\in{{\mathbb F}_q}$. That is,
${{\mathbb F}_{q^2}}=\{x_0+x_1\tau\mid x_0,x_1\in{{\mathbb F}_q}\}.$ It is useful to keep in mind the relationships: $\tau\tau^q=-t_0$, $\tau+\tau^q=t_1$, $t_0/\tau=-\tau^q=\tau-t_1$ and $\tau^{q^2}=1$.
Points in $\PG(2,q^2)$ have homogeneous coordinates
$(x,y,z)$ with $x,y,z\in{{\mathbb F}_{q^2}}$, not all zero.
We let the line at infinity $\li$ 
 have equation $z=0$, so affine points of
$\PG(2,q^2)$ have 
coordinates $(x,y,1)$, with $x,y\in{{\mathbb F}_{q^2}}$. 
Points in $\PG(4,q)$ have homogeneous coordinates
$(x_0,x_1,y_0,y_1,z)$ with
$x_0,x_1,y_0,y_1,z\in{{\mathbb F}_q}$, not all zero. We let the hyperplane at infinity
$\si$ have equation $z=0$, so the affine points of
$\PG(4,q)$ have coordinates
$(x_0,x_1,y_0,y_1,1)$, with $x_0,x_1,y_0,y_1 \in {{\mathbb F}_q}$. 
Let $A$ be an affine point  in $\PG(2,q^2)$ with coordinates $A=(x_0+x_1\tau,y_0+y_1\tau,z)$,  where $
x_0, x_1, y_0, y_1, z\in{{\mathbb F}_q}$, $z\neq 0$.
The map
\begin{eqnarray*}
\varphi: \PG(2,q^2)\setminus\li &\longrightarrow &\PG(4,q)\setminus\si
\\ \mbox {such that } \quad\quad A=(x_0+x_1\tau,y_0+y_1\tau,z)
&\longmapsto&[A]=(x_0,x_1,y_0,y_1,z).
\end{eqnarray*}
 is a bijection from the affine points of
$\PG(2,q^2)$ to the affine points of $\PG(4,q)$, called
the {\em Bruck-Bose map}.
We can extend this to a projective map; for a point $\bar T= (\delta,1,0)\in\li$, write 
$\delta=d_0+d_1 \tau\in{{\mathbb F}_{q^2}}$, $d_0,d_1\in{{\mathbb F}_q}$, then 
$$\bar T= (\delta,1,0)\ \longmapsto \ [T]= \big\langle\,
(d_0,d_1,1,0,0), \ (t_0 d_1,d_0+t_1d_1,0,1,0)\, \big\rangle.$$  The transversal lines $g,g^q$ of  $\S$ have  coordinates given by:
\begin{eqnarray*}
g&=&\big\langle \, A_0=(\tau^q,-1,0,0,0),\ A_1=(0,0,\tau^q,-1,0)\, \big\rangle,\\
g^q&=&\big\langle \, A_0^q=(\tau,-1,0,0,0),\ A_1^q=(0,0,\tau,-1,0)\, \big\rangle.\end{eqnarray*}
Recall that each line of the regular spread $\S$ meets the transversal $g$ of $\S$ in a point. 
The   1-1  correspondence between points of $\li$ and points of $g$ is:
$$ \bar T=   (\delta,1,0)\in\li\  \ \longleftrightarrow \ \ \Tbar=\delta A_0+A_1\in g,\quad\quad \delta\in{{\mathbb F}_{q^2}}\cup\{\infty\},$$
that is, $\Tbar=\Tstar\cap g$ and  $\Tstar=\Tbar\Tqbar$.

\subsubsection{Coordinates and the quartic extension $\PG(4,q^4)$}\Label{sec:notn-bar}

We will be interested in non-degenerate conics of $\PG(2,q^2)$, and one of the cases to consider is when a conic $\mathscr C$ is exterior to $\li$, and so meets $\li$ in two points which lie in the quadratic extension of $\PG(2,q^2)$ to $\PG(2,q^4)$.
That is, $\mathscr C$ meets $\li$ in two points $\bar P,\, \bar Q$ over ${{\mathbb F}_{q^4}}$. Note that $\bar P,\, \bar Q$ are conjugate with respect to the  map $x\mapsto x^{q^2}$, $x\in{{\mathbb F}_{q^4}}$, that is $\bar Q=\bar P^{q^2}$. There is no direct representation for the point $\bar P$ in the Bruck-Bose representation in $\PG(4,q)$. However, there is a related point in the quartic  extension  $\PG(4,q^4)$. We can extend the  1-1  correspondence between  points $\li$ and points of $g$ to a  1-1  correspondence between points of the quadratic extension of $\li$ and points  of  the extended transversal $\gstar$ in $\PG(4,q^4)$, so
$$  \ \ \bar P=(\alpha,1,0)\ \ \ \  \longleftrightarrow \ \ \ \  \Pbar= \alpha A_0+A_1\in \gstar,\quad \alpha\in{{\mathbb F}_{q^4}}\cup\{\infty\}.$$
If $ \bar P=(\alpha,1,0)$ for some $\alpha\in{{\mathbb F}_{q^4}}\setminus{{\mathbb F}_{q^2}}$, that is $ \bar  P\in\PG(2,q^4)\setminus\PG(2,q^2)$, then in $\PG(4,q^4)$, the corresponding point  $\Pbar$ lies in $\gstar\setminus g$, and the conjugate point $\Pqbar=\alpha^q A_0^q+A_1^q$ lies on $\gqstar\setminus g^q$.  As $\bar P\notin\PG(2,q^2)$, the line $\ellP$ is  not a line of the spread $\S$; $\ellP$ is a line of $\PG(4,q^4)$ that does not meet $\si$.

\subsection{$g$-special sets}\Label{sec:def-special}

When studying curves of $\PG(2,q^2)$ in the $\PG(4,q)$ Bruck-Bose setting, the transversals $g,g^q$ of the regular spread $\S$ play an important role in characterisations. 
Let $\V$ be a variety or rational curve in $\PG(4,q)$, 
we are interested in how $\Vstar$ meets $g,g^q$  
 in the extension to $\PG(4,q^2)$. Note that if $\Vstar$ meets $g$ in a point $P$, then as $\V$ is defined over ${{\mathbb F}_q}$, $\Vstar$ also meets $g^q$ in the point $P^q$. 
 A non-degenerate conic $\C$ in $\PG(4,q)$ is called a {\em $g$-special conic} if in $\PG(4,q^2)$, $\C\star$ contains one point of  $g$, and one point of $g^q$.
 A twisted cubic $\N$ in $\PG(4,q)$ is  called a {\em $g$-special twisted cubic} if in $\PG(4,q^2)$, $\Nstar$ contains one point of  $g$, and one point of $g^q$.
 A 4-dimensional normal rational curve $\N$ in $\PG(4,q)$ is  called a {\em $g$-special normal rational curve} if in $\PG(4,q^2)$, $\Nstar$ contains two  points of  $g$ (possibly repeated) and two points of $g^q$. Further, $\N$ is called  {\em $\gstar$-special}  if in the {quartic} extension $\PG(4,q^4)$, $\Nstarstar$ contains two  points of the  extended transversal $\gstar\setminus g$.
 A ruled cubic surface $\V$ in $\PG(4,q)$ is  called a {\em $g$-special ruled cubic surface} if in $\PG(4,q^2)$, $\Vstar$ contains   $g$ and $g^q$.

\subsection{Representations of Baer sublines and subplanes}
 
We use the following representations of Baer sublines and subplanes  of $\PG(2,q^2)$ in $\PG(4,q)$, see \cite{UnitalBook} for more details.  

\begin{result}\Label{BB-Baer} Let $\S$ be a regular spread in a $3$-space $\si$ in $\PG(4,q)$ and consider the representation of the Desarguesian  plane $\P(\S)=\PG(2,q^2)$ defined by the Bruck-Bose construction. 
\begin{enumerate}
\item A Baer subline of $\li$ in $\PG(2,q^2)$ corresponds to a regulus of  $\S$. 
\item A  Baer subline of $\PG(2,q^2)$ that meets $\li$ in a point corresponds to a line of $\PG(4,q)\setminus\si$. 
\item A Baer subplane of $\PG(2,q^2)$  secant to $\li$ corresponds  to a plane of $\PG(4,q)\setminus\si$ not containing a spread line.
\item A   Baer subline of $\PG(2,q^2)$ that is disjoint from $\li$  corresponds  in $\PG(4,q)$ to a $g$-special conic.
\item A Baer subplane tangent to $\li$ at a point $\bar T$ corresponds  in $\PG(4,q)$ to a $g$-special ruled cubic surface containing the corresponding spread line $[T]$.
\end{enumerate}
Moreover, the converse of each of these correspondences holds.
\end{result}

\begin{remark}\Label{remark:BB-Bose}{\rm The correspondences in parts 2 and 3 are not exact at infinity. The exact at infinity representation of a Baer subline that meets $\li$ in a point $T$ is an affine line that meets the spread line $[T]$ {\em union} with the spread line $[T]$. Similarly,  the exact at infinity representation of a secant Baer subplane is a plane $\alpha$ not containing a spread line, {\em union} the lines of $\S$ that $\alpha$ meets. 
}\end{remark}

\subsection{Representations of  subconics}

The representation of non-degenerate conics contained  in a Baer subplane was considered in \cite{quinn-conic}.

\begin{result}\Label{cath-conic} \cite{quinn-conic} 
Let $\C$ be a non-degenerate conic contained  in a Baer subplane $\B$ of $\PG(2,q^2)$.
\begin{enumerate}
\item  Suppose $\B$ is secant to $\li$, then $\C$ corresponds to a non-degenerate conic in the plane $[\B]$ of $\PG(4, q)$.  
\item  Suppose $\B$ is tangent to $\li$, $\B\cap\li\in\C$, and $q\geq 3$. Then $\C$ corresponds to a twisted cubic  on the  ruled cubic surface $[\B]$ of $\PG(4,q)$.
\item Suppose $\B$ is tangent to $\li$,  $\B\cap\li\notin\C$, and $q\geq 4$. Then $\C$ corresponds to a 4-dimensional normal rational curve on the ruled cubic surface $[\B]$ of $\PG(4,q)$.
\end{enumerate}
\end{result}

In Section~\ref{sec:subconics}, we show that the 3- and 4-dimensional normal rational curves of Result~\ref{cath-conic} are $g$-special. Conversely, we show that every $g$-special normal rational curve in $\PG(4,q)$ corresponds to a non-degenerate conic contained  in a tangent Baer subplane.

\begin{remark}\Label{subconic-exact}
{\rm
The correspondence in Result~\ref{cath-conic} parts 1 and 2  is not exact at infinity (compare with 
 Remark~\ref{remark:BB-Bose}). For example, in part 2, the point $\bar T=\B\cap\li$ is in $[\C]$, and the twisted cubic $[\C]$ meets $\si$ in a point of $[T]$. The exact-at-infinty representation is:
the set $[\C]$ is a twisted cubic {\em union} the spread line $[T]$.  We  use the simpler, not exact-at-infinity correspondence given in Result~\ref{cath-conic} as it does not lead to any confusion.
}
\end{remark}

\subsection{The circle geometry $CG(2,q)$}\Label{sec:hyp-cong}

Circle geometries $CG(d,q)$, $d\geq 2$ were introduced in \cite{BruckI,BruckII}, and we summarise the results we need here.  Note that $CG(2,q)$ is an inversive plane. We can construct $CG(2,q)$ from the line $\PG(1,q^2)$, in this case the circles  are the Baer sublines of $\PG(1,q^2)$. Equivalently, we can construct $CG(2,q)$ from the lines of a regular spread $\S$ of $\PG(3,q)$, in this case the circles  are the  reguli contained in  $\S$.
Using the representation of $CG(2,q)$ as $\li\cong\PG(1,q^2)$,  we can use properties of the circle geometry to deduce several  properties of the projective plane $\PG(2,q^2)$. 
If $\bar P,\, \bar Q$ are two distinct points on $\li$ in $\PG(2,q^2)$, 
then there is a unique partition of $\li$ into $\bar P,\, \bar Q$ and $q-1$ Baer sublines $\ell_1,\ldots,\ell_{q-1}$, where the points $\bar P,\, \bar Q$ are conjugate with respect to each Baer subline $\ell_i$. Further, if 
 $\B$ is a Baer subplane  secant to $\li$, such that $\bar P,\, \bar Q$ are conjugate with respect to $\B$, then $\B$ meets $\li$ in one of the Baer sublines $\ell_i$.
Of particular interest is an 
 application to conics.

\begin{result}\Label{part-sec-conic} Let $\mathscr O$ be a non-degenerate conic of $\PG(2,q^2)$ that meets $\li$ in $\{\bar P,\, \bar Q\}$. Then there is a unique partition of the $q^2-1$ affine points of $\mathscr O$ into $q-1$ subconics $\C_1,\ldots,\C_{q-1}$, lying in  Baer subplanes $\B_1,\ldots,\B_{q-1}$ which are secant to $\li$. Further, the    Baer sublines $\B_i\cap\li$ are either equal or disjoint
\end{result}

The properties of the circle geometry also lead to properties of  a regular spread $\S$ in $\PG(3,q)$. 
Let $g,g^q$ be the transversals of $\S$, so $g,g^q$ lie in $\PG(3,q^2)$. 
Consider the set of lines of $\PG(3,q^2)$ that meet both $g$ and $g^q$. This set is called 
the {\em hyperbolic congruence} of $g,g^q$ in \cite{Hirsch2}. Note that if two distinct lines in the hyperbolic congruence meet, then they meet on $g$ or $g^q$. The hyperbolic congruence contains  the extended spread lines $\Pstar=PP^q$ for $P\in g$ and the lines $PQ^q$ for distinct $P,Q\in g$.
 The lines $\gPQ$  have an interesting relationship with the regular spread $\S$.

\begin{result}\cite{BruckII}\Label{res:circle} Let $[P],[Q]$ be two lines of a regular spread $\S$ in $\PG(3,q)$, and denote their intersection with  transversal $g$ of $\S$ by  $P,Q$ respectively. Then there is a unique partition of $\S$ into $[P],[Q]$ and $q-1$ reguli $\R_1,\ldots,\R_{q-1}$. Denote the opposite regulus of $\R_i$ by $\R_i'$. Then the set $\{[P],[Q],\R_1',\ldots,\R_{q-1}'\}$ is a regular spread with   transversals 
 $\gPQ$,  $\gQP$.
   \end{result}

We will show that the lines in the hyperbolic congruence of $g,g^q$ are related to the Bruck-Bose representation of non-degenerate conics of $\PG(2,q^2)$ in $\PG(4,q)$.

\subsection{Normal rational curves contained in quadrics}

Next, we show that if a normal rational curve is contained in a quadric in  $\PG(4,q)$, then the containment also holds in the quadratic extension $\PG(4,q^2)$, provided $q$ is not small.

\begin{lemma}\Label{lem:nrc-extn}
 In $\PG(4,q)$, $q>7$, let $\N$ be a 4-dimensional normal rational curve and  $\Q$ a quadric, with $\N\subset \Q$. Then in the quadratic extension $\PG(4,q^2)$, $\Nstar\subset\Qstar$.
\end{lemma}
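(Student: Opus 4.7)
The plan is to use the standard rational parametrisation of a $4$-dimensional normal rational curve to reduce the containment $\N\subset\Q$ to a univariate polynomial identity, then invoke the degree bound to push the identity through from $\Fq$ to $\mathbb F_{q^2}$. Since all $4$-dimensional normal rational curves of $\PG(4,q)$ are projectively equivalent, we may choose coordinates so that
\[
\N=\{\phi(t)=(1,t,t^2,t^3,t^4)\st t\in\Fq\}\cup\{\phi(\infty)=(0,0,0,0,1)\},
\]
and write the defining equation of $\Q$ as $f(x_0,\ldots,x_4)=\sum_{0\le i\le j\le 4}a_{ij}x_ix_j=0$, with all $a_{ij}\in\Fq$. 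The extension $\Qstar$ is defined by the same equation $f=0$ over $\mathbb F_{q^2}$, and the extension $\Nstar$ is the curve parametrised by $\phi(t)$ for $t\in\mathbb F_{q^2}\cup\{\infty\}$.

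Next I would substitute the parametrisation into $f$ to obtain
\[
g(t)=f(\phi(t))=\sum_{0\le i\le j\le 4}a_{ij}\,t^{i+j},
\]
a polynomial in $t$ of degree at most $8$, whose coefficient of $t^8$ equals $a_{44}$. The hypothesis $\N\subset\Q$ has two consequences: first, $\phi(\infty)\in\Q$ gives $f(0,0,0,0,1)=a_{44}=0$, so $\deg g\le 7$; second, $g(t)=0$ for every $t\in\Fq$, giving $g$ at least $q$ roots in $\Fq$. Because $q>7$, we have $q\ge 8>\deg g$, so $g$ is the zero polynomial in $\Fq[t]$, and therefore also the zero polynomial in $\mathbb F_{q^2}[t]$.

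Finally, $g\equiv 0$ means $f(\phi(t))=0$ for every $t\in\mathbb F_{q^2}$, so every affine point of $\Nstar$ lies on $\Qstar$; the point $\phi(\infty)$ lies on $\Qstar$ by the already noted vanishing $a_{44}=0$. Hence $\Nstar\subset\Qstar$, as required. The only real subtlety is the need to use the point $\phi(\infty)\in\Q$ to drop the degree of $g$ from $8$ to $7$: without that reduction the bound $q>7$ would be insufficient in the boundary case $q=8$, where $q$ affine roots alone cannot force a degree-$8$ polynomial to vanish. Once this observation is made, the rest is the standard polynomial-identity principle.
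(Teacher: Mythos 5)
Your proposal is correct and follows essentially the same argument as the paper: parametrise $\N$ as $(1,t,t^2,t^3,t^4)$, substitute into the quadric's equation to get a polynomial of degree at most $8$, use the point at $\infty$ to kill the $t^8$ coefficient, and then conclude from $q>7$ roots that the polynomial vanishes identically over every extension. Your explicit remark about why the degree reduction from $8$ to $7$ is needed for the boundary case is the same observation the paper makes, just stated more cleanly.
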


\begin{proof} Without loss of generality, let $\N=\{P_\theta=(1,\theta,\theta^2,\theta^3,\theta^4)\st\theta\in{{\mathbb F}_q}\cup\{\infty\}\}$.
Let $\Q$ have equation $g(x_0,x_1,x_2,x_3,x_4)=0$. Consider $g(P_\theta)=g(1,\theta,\theta^2,\theta^3,\theta^4)=f(\theta)$. As $\Q$ is a quadric, $f(\theta)$ is a polynomial in $\theta$ of degree at most 8. Now as $\N\subset\Q$, $f(P_\theta)=0$ for all $\theta\in{{\mathbb F}_q}\cup\{\infty\}$. So if $q+1>8$, $f$ is identically 0, and so
$f(P_\theta)=0$ for all $\theta\in{{\mathbb F}_{q^2}}$. Using  $\theta=\infty$, this implies that the coefficient of $\theta^8$ is zero, thus the degree of $f$ is at most $7$.  As $f(\theta)=0$ for the $q$ values of $\theta\in{{\mathbb F}_q}$, it follows that $f$ has $q$ roots, so if $q>7$ then $f$ is the zero polynomial, thus $f(\theta)=0$ for all $\theta$ in any extension of ${{\mathbb F}_q}$, and so $g(P_\theta)=0$ for all $\theta$ in any extension of ${{\mathbb F}_q}$. So if $q>7$, the point  $P_\theta$, $\theta\in{{\mathbb F}_{q^2}}\cup\{\infty\}$, lies on $\Qstar$, and so $\Nstar\subset\Qstar$.
\end{proof}

The bound on $q$ in Lemma~\ref{lem:nrc-extn} is tight  as shown by the following example. In $\PG(4,7)$, let
$\N$ be the normal rational curve $\N=\{P_\theta=(1,\theta,\theta^2,\theta^3,\theta^4)\st\theta\in\GF(7)\cup\{\infty\}\}$ and let $\Q$ be the quadric with equation $f(x_0,x_1,x_2,x_3,x_4)=-x_0x_1-x_3^2+x_2x_4+x_3x_4$. First note that $f(P_\theta)=\theta^7-\theta=0$ for all $\theta\in\GF(7)$. Further,   $P_\infty=(0,0,0,0,1)$, so $f(P_\infty)=0$. Hence $\N\subset\Q$ in $\PG(4,7)$. Now extend $\GF(7)$ to $\GF(7^2)$ using a primitive element  $\tau$. The point $P_\tau=(1,\tau,\tau^2,\tau^3,\tau^4)$ lies in the extended curve $\Nstar$. However $f(P_\tau)=\tau^7-\tau\ne 0$ as $\tau\not\in\GF(7)$, and so $P_\tau$ does not lie on the extended quadric $\Qstar$, that is $\Nstar\not\subset\Qstar$.

%

\subsection{Baer pencils and partitions of Baer subplanes}\Label{sec:partition}

In this section we investigate the representation in $\PG(2,q^2)$ of a $3$-space of $\PG(4,q)$. We use this to partition tangent Baer subplanes into conics.

\begin{definition}
A {\sl Baer pencil} $\K$ in $\PG(2,q^2)$ is the cone of $q+1$ lines joining a vertex point $P$ to a  Baer subline base $b$.  An {\sl $\li$-Baer pencil} $\K$ is a Baer pencil with vertex in $\li$, and base  $b$ meeting $\li$ in a  point.
 \end{definition}

Let $\K$ be a Baer pencil, then every line  of $\PG(2,q^2)$ not through the vertex of $\K$  meets $\K$ in a Baer subline. Also note that an 
 $\li$-Baer pencil $\K$ contains  $\li$, and a further $q^3$ affine points.
It is straightforward to characterise the $\li$-Baer pencils of $\PG(2,q^2)$ in $\PG(4,q)$.

\begin{lemma}\Label{lemma-3-Baer} Let $\Pi$ be a $3$-space in $\PG(4,q)$ distinct from $\si$. 
Then $\Pi$ corresponds in $\PG(2,q^2)$ to an $\li$-Baer pencil with vertex corresponding to the unique spread line in  $\Pi$. Conversely, any $\li$-Baer pencil in $\PG(2,q^2)$ corresponds to a $3$-space of $\PG(4,q)$.
\end{lemma}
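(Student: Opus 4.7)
The plan is to analyze $\Pi\cap\si$ using the Bruck--Bose dictionary for spread lines, affine planes, and affine lines.

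First I would observe that $\pi:=\Pi\cap\si$ is a plane of $\si$, and show that $\pi$ contains exactly one spread line. Distinct spread lines are disjoint, so $\pi$ contains at most one; a count of $|\pi|=q^2+q+1$ points against the spread partition of $\si$ (each spread line not in $\pi$ contributes only one point) forces at least one spread line to lie in $\pi$, hence exactly one. Call this line $[T]$ and the corresponding point of $\li$ by $\bar T$.

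For the forward direction, I would partition $\Pi$ using the pencil of $q+1$ planes through $[T]$ inside $\Pi$. One of them is $\pi\subset\si$; the other $q$, say $\alpha_1,\ldots,\alpha_q$, are planes of $\PG(4,q)\setminus\si$ containing the spread line $[T]$, so by the Bruck--Bose construction each corresponds to an affine line $m_i$ of $\PG(2,q^2)$ through $\bar T$. Since the $\alpha_i$ pairwise meet only in $[T]$, the $q\cdot q^2=q^3$ affine points of $\Pi$ match the $q^3$ affine points of an $\li$-Baer pencil through $\bar T$. The crux is to exhibit a Baer-subline base: I would pick any line $\ell\subset\Pi$ with $\ell\not\subset\si$ and $\ell\cap[T]=\emptyset$, which exists because $\Pi$ is $3$-dimensional (for instance, the line joining a point of $\Pi\setminus\si$ to a point of $\pi\setminus[T]$). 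By Result~\ref{BB-Baer}(2), $\ell$ corresponds to a Baer subline $b$ of $\PG(2,q^2)$ meeting $\li$ at a point $\bar S\neq\bar T$. Inside $\Pi$ every plane through $[T]$ meets $\ell$ in exactly one point (it cannot contain $\ell$, else $[T]$ and $\ell$ would be coplanar, contradicting their disjointness), so the $q+1$ planes through $[T]$ cut out all $q+1$ points of $\ell$. Translating through Bruck--Bose, each $m_i$ meets $b$ in one affine point and $\li$ meets $b$ at $\bar S$, so $\{m_1,\ldots,m_q,\li\}$ are precisely the $q+1$ joins of $\bar T$ to the points of $b$, i.e., the lines of the $\li$-Baer pencil with vertex $\bar T$ and base $b$.

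For the converse, given an $\li$-Baer pencil $\K$ with vertex $\bar T$ and base $b$ meeting $\li$ at $\bar S\neq\bar T$, I would let $[T]$ be the spread line through $\bar T$ and let $\ell$ be the line of $\PG(4,q)\setminus\si$ corresponding to $b$ via Result~\ref{BB-Baer}(2). Since $\ell\cap\si\in[S]$ and $[S]\cap[T]=\emptyset$, the lines $[T]$ and $\ell$ are skew, so $\Pi:=\langle[T],\ell\rangle$ is a $3$-space distinct from $\si$; applying the forward direction to this $\Pi$ recovers $\K$. The main care needed throughout is the exact-at-infinity convention of Remark~\ref{remark:BB-Bose}: the ``$\li$'' summand of the pencil is carried by $\pi\subset\si$ rather than by a BB-plane of $\Pi$, but this is exactly how Bruck--Bose attaches the at-infinity data to $\Pi$, so the subtlety is formal rather than genuine.
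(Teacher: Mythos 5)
The paper offers no proof of Lemma~\ref{lemma-3-Baer} --- it is stated as a ``straightforward'' characterisation --- so there is no argument of the authors' to compare yours against. Your proof is correct and complete: the counting argument pins down the unique spread line $[T]\subset\Pi\cap\si$, the pencil of planes through $[T]$ in $\Pi$ gives the $q$ affine lines through $\bar T$ plus $\li$, and the auxiliary line $\ell$ disjoint from $[T]$ (via Result~\ref{BB-Baer}(2)) supplies the Baer-subline base and shows the $q+1$ lines really are the joins of $\bar T$ to the points of $b$; the converse by taking $\langle[T],\ell\rangle$ and re-running the forward direction is also sound. This is exactly the argument the authors presumably had in mind, and your remark on the exact-at-infinity convention of Remark~\ref{remark:BB-Bose} correctly identifies the only point where care is needed.
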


We look at how $\li$-Baer pencils meet a tangent Baer subplane.

\begin{theorem}\Label{thm:partition-intro}
Let $\B$ be a  Baer subplane in $\PG(2,q^2)$ tangent to $\li$ at the point $\bar T=\B\cap\li$. An $\li$-Baer pencil with vertex $\bar P\neq\bar  T$ meets $\B$ in 
 either a non-degenerate conic through $\bar T$; or in 
 two lines, namely the unique line of $\B$ whose extension contains $\bar P$, and one line through $\bar T$.
Of the $\li$-Baer pencils with vertex $\bar P$, there are $q^2-1$ of the first type, and $q+1$ of the second type (each containing one of the $q+1$ lines of $\B$ through $\bar T$).
\end{theorem}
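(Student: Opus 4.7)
The plan is to translate to the Bruck-Bose setting and apply Result~\ref{3-space-meets-ruled} to the hyperplane section $\Pi \cap [\B]$. By Lemma~\ref{lemma-3-Baer}, $\K$ corresponds to a $3$-space $\Pi \ne \si$ whose unique spread line is $[P]$, and by Result~\ref{BB-Baer}(5), $\B$ corresponds to a ruled cubic surface $[\B]$ with line directrix $[T]$. Since $[P]$ and $[T]$ are distinct lines of the regular spread, they are skew in $\si$, so $[T] \not\subset \Pi$ and $\Pi$ meets $[T]$ in exactly one point. Under Bruck-Bose, the affine part of $\K\cap\B$ matches the affine part of $\Pi \cap [\B]$ bijectively, and the single point $\Pi \cap [T]$ represents $\bar T$.

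Because $[T] \not\subset \Pi$, the first three cases of Result~\ref{3-space-meets-ruled} (all requiring $\Pi \supseteq [T]$) are excluded, leaving only case (v) a twisted cubic, or case (iv) a conic plus a generator line. In case (v), Result~\ref{cath-conic}(2) says every non-degenerate conic of $\B$ through $\bar T$ maps to a twisted cubic on $[\B]$; this forward map is injective on underlying point sets, and both sides have cardinality $q^4-q^2$ (the number of conics of $\B \cong \PG(2,q)$ through a fixed point, and the number of twisted-cubic hyperplane sections of $[\B]$ from Result~\ref{3-space-meets-ruled}), so by a counting argument the correspondence is bijective. Hence in case (v), $\K \cap \B$ is a non-degenerate conic of $\B$ through $\bar T$.

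In case (iv), $\Pi \cap [\B] = [n_1] \cup [n_2]$, where (by Result~\ref{BB-Baer}(4)) $[n_1]$ is a $g$-special conic corresponding to a line $n_1$ of $\B$ not through $\bar T$, and $[n_2]$ is a generator of $[\B]$ corresponding to a line $n_2$ of $\B$ through $\bar T$, so $\K \cap \B = n_1 \cup n_2$. The main obstacle is identifying $n_1$: the plane of the conic $[n_1]$ is the Bruck-Bose plane of the extended line $\bar n_1$, meeting $\si$ in the spread line corresponding to $\bar n_1 \cap \li$, and since $[P]$ is the unique spread line of $\Pi$, this forces $\bar n_1 \cap \li = \bar P$, so $n_1$ is the unique line of $\B$ whose extension contains $\bar P$. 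For the count, there are $q^2+q$ three-spaces $\Pi \ne \si$ through $[P]$; case (iv) is realised by combining the fixed plane of $[n_1]$ with each of the $q+1$ generators of $[\B]$, giving $q+1$ distinct $3$-spaces (distinct since any two such spaces together would contain two disjoint generators, contradicting Result~\ref{3-space-meets-ruled}), so the remaining $q^2-1$ fall into case (v).
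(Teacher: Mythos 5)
Your proof is correct and follows essentially the same route as the paper: pass to the Bruck--Bose setting, apply Result~\ref{3-space-meets-ruled} to the hyperplane sections of the ruled cubic surface $[\B]$, identify the line $\mP$ via the unique plane through $[P]$ that meets $[\B]$ in a conic, and count. The only organisational difference is that the paper groups the $q^2+q$ hyperplanes through $[P]$ into pencils through the planes $\langle X,[P]\rangle$ for $X\in[T]$, whereas you exclude the first three cases of Result~\ref{3-space-meets-ruled} directly and make explicit the $q^4-q^2$ counting bijection justifying the converse use of Result~\ref{cath-conic}(2) (an argument the paper itself only spells out later, in Theorem~\ref{conv-tgt}).
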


\begin{proof} In $\PG(4,q)$, let $X$ be a point on the spread line $[T]$, and let 
$\alpha=\langle X,[P]\rangle$. Label the $3$-spaces of 
$\PG(4,q)$ (not equal to $\si)$ that contain the plane $\alpha$ by $\L=\{\Pi_1,\ldots,\Pi_q\}$. By Lemma~\ref{lemma-3-Baer}, each  $3$-space in $\L$ 
corresponds to an $\li$-Baer pencil of $\PG(2,q^2)$ with vertex $P$. 
Result~\ref{3-space-meets-ruled} describes how a $3$-space meets the 
ruled cubic surface  $[\B]$. 
As each 3-space in $\L$ meets $[T]$ in one point, and the $3$-spaces in $\L$ partition the affine points, we deduce that 
one of the 3-spaces in $\L$, $\Pi_1$ say, meets
 $[\B]$ in  a conic and the generator line of $[\B]$ through $X$, and the remaining 3-spaces in $\L$ meet $[\B]$ in a twisted cubic $\N_i=\Pi_i\cap[\B]$, $i=2,\ldots,q$. 
 By Result~\ref{cath-conic}, the twisted cubics $\N_i$ each correspond in $\PG(2,q^2)$ to   non-degenerate conics in $\B$ that contains $\bar T$.
Note that there is a unique plane of $\PG(4,q)\setminus\si$ that contains the spread line $[P]$ and meets $[\B]$ in a conic; namely the plane that corresponds in $\PG(2,q^2)$ to the unique line $\mP$ through $\bar P$ that meets $\B$ in a Baer subline.
Hence $\Pi_1\cap[\B]$ contains the generator line $[m]$ of $[\B]$ through the point $X$, and a conic in the plane $[\mP]$. This corresponds in $\PG(2,q^2)$ to an $\li$-Baer pencil with vertex $\bar P$ that meets $[\B]$ in the two Baer sublines $\mP\cap\B$ and $m$.

 As there are $q+1$ choices for the point $X$ on $[T]$, there are $(q+1)(q-1)$ 3-spaces about $[P]$ that meets $[\B]$ in a twisted cubic, and $q+1$ that meet $[\B]$ in a line  and a conic, giving the required number of Baer pencils. \end{proof}

The next result shows that a non-degenerate conic in $\B$ lies in a unique $\li$-Baer pencil, and describes the relationship between the conic and the vertex of the pencil. 

\begin{theorem}\Label{cor:tgt-baby}
 Let $\B$ be a  Baer subplane in $\PG(2,q^2)$ tangent to $\li$ at the point $\bar T=\B\cap\li$ and let $\C$ be
 a non-degenerate conic  in $\B$ with $\bar T\in\C$.
 Then $\C$ lies in a unique $\li$-Baer pencil $\K$. Moreover, 
 the vertex of $\K$  lies in the extension of $\C$ to $\PG(2,q^2)$.
\end{theorem}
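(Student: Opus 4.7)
The plan is to translate the problem to $\PG(4,q)$ via Bruck--Bose and exploit Result~\ref{cath-conic}, which represents $[\C]$ as a twisted cubic $\N$ on the ruled cubic surface $[\B]$, handling the two claims separately.

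For uniqueness, since $q\ge 3$ the twisted cubic $\N$ spans a unique $3$-space $\Pi$ of $\PG(4,q)$ (its $q+1\ge 4$ points lie in general position). If $\K'$ is any $\li$-Baer pencil with $\K'\cap\B=\C$, then by Lemma~\ref{lemma-3-Baer} it corresponds to a $3$-space $\Pi'$; by Result~\ref{3-space-meets-ruled}, $\Pi'\cap[\B]$ is a twisted cubic, and as it contains the $q$ affine image points of $\N$, it must equal $\N$. Hence $\Pi'\supset\N$, forcing $\Pi'=\Pi$; Lemma~\ref{lemma-3-Baer} then yields $\K'=\K$, and the vertex $\bar P$ of $\K$ corresponds to the unique spread line $[P]\subset\Pi$.

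For the moreover statement, pass to the quadratic extension $\PG(4,q^2)$ and use the $g$-specialness of $[\B]$ from Result~\ref{BB-Baer}(5), which gives $g,g^q\subset\Bstar$. The extended spread line $\Pstar=PP^q$ lies in $\Pistar$, so $P\in\Pistar\cap g$. Since $\Pi\ne\si$ we have $\Pistar\ne\sistar$; as $\sistar=\langle g,g^q\rangle$, the Frobenius symmetry of $\Pistar$ over $\GF(q)$ rules out $g\subset\Pistar$ (otherwise $g^q\subset\Pistar$ as well, forcing $\Pistar=\sistar$, a contradiction). Hence $\Pistar\cap g=\{P\}$. Applying Result~\ref{3-space-meets-ruled} over $\GF(q^2)$, the intersection $\Pistar\cap\Bstar$ must fall into one of the listed cases; since it contains the twisted cubic $\Nstar$, it must be the twisted cubic case and equal $\Nstar$. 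Combining, $P\in g\cap\Pistar\subset\Bstar\cap\Pistar=\Nstar$.

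The principal obstacle is then translating $P\in\Nstar$ in $\PG(4,q^2)$ back to $\bar P\in\tilde\C$ in $\PG(2,q^2)$, since the extension of the twisted cubic $[\C]\subset\PG(4,q)$ to $\PG(4,q^2)$ is of a different nature than the extension of the subconic $\C\subset\B$ to the conic $\tilde\C$ of $\PG(2,q^2)$. I would bridge them by noting that $\C\subset\tilde\C$ gives the point-set inclusion $[\C]\subset[\tilde\C]$ in $\PG(4,q)$, and hence the extensions satisfy $\Nstar\subset[\tilde\C]\star$. Under the bijection $\bar R\leftrightarrow R$ of Section~\ref{sec:notn-bar}, the intersection $[\tilde\C]\star\cap g$ corresponds to $\tilde\C\cap\li$; since $P\ne T$ (as $\bar P\ne\bar T$), the point $P$ must correspond to the second intersection of $\tilde\C$ with $\li$, yielding $\bar P\in\tilde\C$. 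Making this last translation fully rigorous may require either invoking the framework developed in Section~\ref{sec:adult-conic} or computing both extensions directly in coordinates for this specific case.
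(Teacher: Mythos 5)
Your uniqueness argument is essentially the paper's: the affine points of the twisted cubic $[\C]$ span a unique $3$-space, so $\C$ lies in exactly one $\li$-Baer pencil, whose vertex corresponds to the unique spread line in that $3$-space. That part is fine.

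The ``moreover'' part, however, has a genuine gap, and it occurs exactly at the point you identify as the principal obstacle. First, your application of Result~\ref{3-space-meets-ruled} over $\mathbb F_{q^2}$ begs the question: you conclude that $\Pistar\cap\Bstar=\Nstar$ ``since it contains the twisted cubic $\Nstar$'', but the containment $\Nstar\subseteq\Bstar$ (equivalently $\Nstar\subseteq\Pistar\cap\Bstar$) is precisely what is not yet known --- all you know a priori is that $\Pistar\cap\Bstar$ contains the $q+1$ points of $\N$ over $\mathbb F_q$. Repairing this requires either a counting/case analysis on Result~\ref{3-space-meets-ruled} (which only works for $q$ large enough) or the quadric-extension machinery of Lemma~\ref{lem:nrc-extn}; the paper does exactly this later, in the proof of Theorem~\ref{thm-tgt-conic-T-1}, and needs $q>5$ there. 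Second, even granting $P\in\Nstar$, the bridge from $P\in g$ back to $\bar P\in\Cplus$ is left unproved: the claim that $[\Cplus]\star\cap g$ corresponds to $\Cplus\cap\li$ is the content of Theorem~\ref{adult-conic-g}/Corollary~\ref{cor:PcorrPsigma}, a coordinate computation from Section~\ref{sec:adult-conic} that you would have to import wholesale. So your route makes this preliminary result depend on the heaviest parts of the paper and on restrictions on $q$ that the statement does not carry.

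The paper's own argument for the vertex is far more elementary and entirely inside $\PG(2,q^2)$: since $\li$ is not a line of $\B$ it is not the tangent to $\C$ at $\bar T$, so $\Cplus\cap\li=\{\bar T,\bar L\}$ with $\bar L\ne\bar T$; a direct parametrization ($\C=\{(1,\theta,\theta^2)\}$, projection from $(1,\omega,\omega^2)$ onto $x=0$ giving $(0,1,\theta+\omega)$) shows that every point of $\Cplus\setminus\C$ projects $\C$ onto a Baer subline, so $\bar L$ is the vertex of some $\li$-Baer pencil containing $\C$; uniqueness then forces the vertex to be $\bar L\in\Cplus$. If you keep your Bruck--Bose uniqueness argument and replace your extension-field argument for the vertex by this projection computation, the proof is complete and valid for all $q$.
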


\begin{proof} Let $\C$ be a non-degenerate conic in $\B$, with $\bar T=\B\cap\li\in\C$. As $\li$ is not a line of $\B$, it is not the tangent line of $\C$ at the point $\bar T$. Let $\Cplus$ be the extension of $\C$ to $\PG(2,q^2)$, then 
 $\li$ is a secant to $\Cplus$, so $\Cplus\cap\li=\{\bar T,\bar L\}$. We will show that $\C$ lies in a unique $\li$-Baer pencil $\K$ which has  vertex $\bar L$. 

We first show that any point $X\in\Cplus$ projects $\C$ onto a Baer subline. 
Without loss of generality, let $\C=\{P_\theta=(1,\theta,\theta^2)\st\theta\in{\Fqq}\cup\{\infty\}\}$, so 
$\Cplus=\{(1,\theta,\theta^2)\st\theta\in{{\mathbb F}_q}\cup\{\infty\}\}$.
Let $\omega\in{{\mathbb F}_{q^2}}\setminus{{\mathbb F}_q}$, so the point  
$X=(1,\omega,\omega^2)$ lies in $\Cplus\setminus\C$. The projection of the point 
$P_\theta$, $\theta\in{{\mathbb F}_q}\cup\{\infty\}$ from $X$ onto the line $\ell$ with equation $x=0$ is $P_\theta'=(0,1,\theta+\omega)$. That is, the projection of $\C$ from $X$ onto $\ell$ is the  set $\{(0,1,\omega)+\theta(0,0,1)\st\theta\in{{\mathbb F}_q}\cup\{\infty\}\}$, which is a Baer subline.

We next show that $\C$  lies in a unique $\li$-Baer pencil.
By Result~\ref{cath-conic}, in $\PG(4,q)$, $[\C]$ is a twisted cubic meeting the spread line $[T]$ in one point, and $[\C]$  lies in a 3-space $\Pi$ that meets $[T]$ in exactly one point. Hence $\Pi$ contains a unique spread line $[P]$, with $\bar P\neq \bar T$. By Lemma~\ref{lemma-3-Baer}, $\Pi$ corresponds to an $\li$-Baer pencil $\K$ with vertex $\bar P$, so $\C$ lies in the pencil $\K$. 
If $\C$ were in two $\li$-Baer pencils $\K,\K'$, then $[\C]$ would lie in two 3-spaces $\Pi_\K$, $\Pi_{\K'}$, which is not possible.

Hence $\C$ lies in a unique $\li$-Baer pencil $\K$ with some vertex $\bar P\in\li$.  Further, as argued above, the point   $\bar L\in\Cplus\cap\li$ projects $\C$ onto a Baer subline, and so $\C$ lies in an $\li$-Baer pencil with vertex $\bar L$. Thus $\bar P=\bar L$ as required. 
\end{proof}

         The $\li$-Baer pencils give rise to partitions of the affine points of  a tangent Baer subplane into 
$q$ conics: one degenerate and $q-1$ non-degenerate.

\begin{corollary}\Label{thm:partition}
Let $\B$ be a  Baer subplane in $\PG(2,q^2)$ tangent to $\li$ at the point $\bar T=\B\cap\li$.  For each line $m$ of $\B$ through $\bar T$ and point $\bar P\in\li$, $\bar P\neq \bar T$, there is a set of $q$ $\li$-Baer pencils with  vertex $\bar P$ that partition the affine points of $\PG(2,q^2)$; and partition the affine points of $\B$ into $q$ conics through $\bar T$, one being degenerate. Moreover, the extension of each of these conics to $\PG(2,q^2)$ contains the point $\bar P$ (see Figure~\ref{conics-partition}).
\end{corollary}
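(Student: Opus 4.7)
The plan is to work in the $\PG(4,q)$ Bruck-Bose representation and assemble the hyperplane construction used in Theorem~\ref{thm:partition-intro} with Theorem~\ref{cor:tgt-baby}. Given the line $m$ of $\B$ through $\bar T$ and the point $\bar P\in\li\setminus\{\bar T\}$, let $[m]$ be the generator of the ruled cubic surface $[\B]$ corresponding to $m$, set $X=[m]\cap[T]$, and form the plane $\alpha=\langle X,[P]\rangle$. Since $X,[P]\subset\si$ we have $\alpha\subset\si$, and as two disjoint spread lines cannot lie in a common plane, $\alpha$ contains $[P]$ as its unique spread line. Let $\Pi_1,\ldots,\Pi_q$ be the $q$ hyperplanes through $\alpha$ different from $\si$; since $\Pi_i\cap\si=\alpha$ contains the unique spread line $[P]$, Lemma~\ref{lemma-3-Baer} shows each $\Pi_i$ corresponds to an $\li$-Baer pencil $\K_i$ with vertex $\bar P$.

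For the partition of the affine points of $\PG(2,q^2)$, I would use the fact that the $q+1$ hyperplanes through a codimension-$2$ subspace partition the points outside that subspace. Since $\alpha\subset\si$, every affine point of $\PG(4,q)$ lies outside $\alpha$, and being affine it cannot lie in $\si$, so it lies in exactly one $\Pi_i$. Translating to $\PG(2,q^2)$ yields that $\K_1,\ldots,\K_q$ partition the affine points of $\PG(2,q^2)$.

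For the partition of the affine points of $\B$ into conics, apply Theorem~\ref{thm:partition-intro} to this specific $X$: the unique $\Pi_i$ containing $[m]$, say $\Pi_1$, meets $[\B]$ in a conic together with $[m]$ and hence meets $\B$ in the two lines $m\cup\mP$, while each of the remaining $q-1$ hyperplanes meets $[\B]$ in a twisted cubic, which by Result~\ref{cath-conic} corresponds to a non-degenerate conic $\C_i$ in $\B$ passing through $\bar T$. Combined with the partition of the previous paragraph, these $q$ intersections partition the $q^2+q$ affine points of $\B$; the count $2q+(q-1)q=q^2+q$ is consistent ($\mP$ contributes $q+1$ affine points, $m$ contributes $q$, they share one affine point, and each $\C_i$ contributes $q$ affine points).

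For the statement about the extensions containing $\bar P$, I would handle the two cases separately. For the degenerate conic $m\cup\mP$, the line $\mP$ was defined precisely as the line of $\B$ whose extension to $\PG(2,q^2)$ contains $\bar P$. For each non-degenerate $\C_i$, Theorem~\ref{cor:tgt-baby} says $\C_i$ lies in a unique $\li$-Baer pencil whose vertex lies on the extension $\Cplus_i$; since $\C_i\subset\K_i$, uniqueness forces that vertex to be $\bar P$. The only step requiring genuine verification is the geometric claim that $\alpha$ contains $[P]$ as its only spread line, which guarantees the $\K_i$ have the prescribed vertex $\bar P$; every other ingredient is either a direct quotation of Theorems~\ref{thm:partition-intro} and~\ref{cor:tgt-baby}, or a routine hyperplane-pencil partition argument.
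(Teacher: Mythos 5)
Your proof is correct and takes essentially the same route as the paper's: the paper likewise forms $\alpha=\langle X,[P]\rangle$ with $X=[m]\cap[T]$, takes the $q$ $3$-spaces through $\alpha$ other than $\si$, invokes the argument of Theorem~\ref{thm:partition-intro} to sort the intersections with $[\B]$ into one degenerate and $q-1$ non-degenerate conics, and cites Theorem~\ref{cor:tgt-baby} for the claim that $\bar P$ lies on each extended conic. Your version only adds detail the paper leaves implicit (uniqueness of the spread line in $\alpha$, the hyperplane-pencil partition, and the affine point count $2q+(q-1)q=q^2+q$).
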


\begin{figure}[h]\begin{center}

\hspace*{3cm}\input{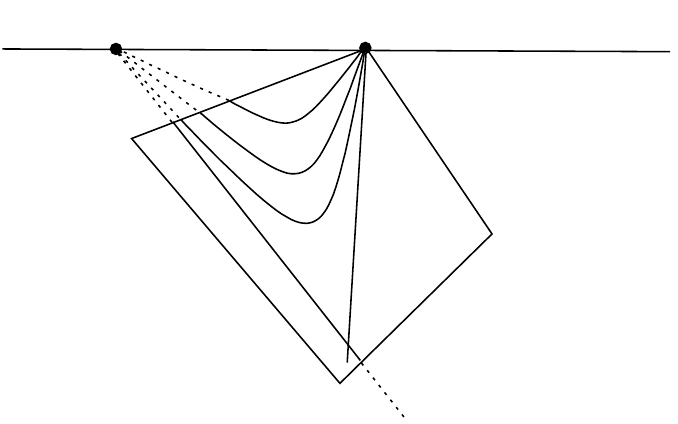_t}
\caption{A partition of $\B\setminus \bar T$  into $q$ conics through $\bar T$}
\label{conics-partition}\end{center}
\end{figure}

\begin{proof}
The proof of Theorem~\ref{thm:partition-intro}
gives a construction for these partitions. The line $m$ corresponds in $\PG(4,q)$ to a line $[m]$ that meets the spread line $[T]$ in a point $X$. Let $\L$ be 
 the set of $q$ $3$-spaces  of $\PG(4,q)\setminus\si$ containing the plane $\alpha=\langle X,[P]\rangle$. These 3-spaces partition the affine points of $\PG(4,q)$ and hence partition the affine points of $[\B]$. 
 As argued in the proof of Theorem~\ref{thm:partition-intro}, one of the 3-spaces in $\L$ gives rise in $\PG(2,q^2)$ to  two lines in $\B$, and the remaining $q-1$ give rise to non-degenerate conics of $\B$ containing $\bar T$. By Theorem~\ref{cor:tgt-baby}, the extension of these conics to $\PG(2,q^2)$   contains the point $\bar P$. 
 \end{proof}

\section{Specialness and Baer sublines and subplanes}\Label{sec:special-other-Baer}

Parts 4 and 5  of Result~\ref{BB-Baer} illustrate that the concept of $g$-specialness is important in the Bruck-Bose representation of Baer substructures. In this section we discuss how parts 1 and 3 of Result~\ref{BB-Baer} relate to the notion of specialness.

Let $b$ be a Baer subline of $\li$, then by Result~\ref{BB-Baer}(1), in $\PG(4,q)$, $[b]$ is a regulus contained in the regular spread $\S$. Hence in $\PG(4,q^2)$, the transversals $g,g^q$ of $\S$ are lines of the regulus opposite to $\bstar$. That is, the regulus $[b]$ is closely related to the transversals of $\S$. There is another way to express this relationship.

\begin{theorem} \begin{enumerate}
\item Let $b$ be a Baer subline of $\li$ in $\PG(2,q^2)$. Then in the Bruck-Bose representation in $\PG(4,q)$, each non-degenerate conic contained in the regulus $[b]$ is a $g$-special conic.
\item Conversely, every $g$-special conic in $\si$ lies in a unique regulus of $\S$, and so corresponds to a Baer subline of $\li$.
\end{enumerate}
\end{theorem}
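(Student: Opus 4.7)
\textbf{Part (1)} is essentially immediate. Let $\mathcal{H} \subset \si$ be the hyperbolic quadric spanned by the regulus $[b]$. The extension $\mathcal{H}\star$ contains both transversals $g$ and $g^q$: the line $g$ meets each of the $q+1$ lines of $\bstar$ in a point, so $g \cap \mathcal{H}\star$ has at least $q+1 \geq 3$ points, forcing $g \subset \mathcal{H}\star$, and similarly for $g^q$. A non-degenerate conic $\C \subset [b]$ must equal $\pi \cap \mathcal{H}$ for a unique non-tangent plane $\pi$, so $\C\star = \pi\star \cap \mathcal{H}\star$. A brief coordinate check using the form of $g$ in Section~\ref{sec:background-coord} shows $g \not\subset \pi\star$ for any $\Fq$-defined plane $\pi$, so $g \cap \pi\star$ is a single point, which automatically lies in $\mathcal{H}\star \cap \pi\star = \C\star$. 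Hence $\C\star \cap g$ is a single point; by conjugation so is $\C\star \cap g^q$, and $\C$ is $g$-special.

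\textbf{For part (2),} let $\C$ be $g$-special lying in the plane $\pi$, set $P_0 = g \cap \pi\star$, and note $P_0, P_0^q \in \C\star$ by hypothesis and conjugation. The strategy is to identify the $q+1$ spread lines through the points of $\C$, show they correspond under $g \leftrightarrow \li$ to a Baer subline of $\li$, and then invoke Result~\ref{BB-Baer}(1). A pigeonhole count ($|\pi|=q^2+q+1 > q^2+1 = |\S|$) shows $\pi$ contains a unique spread line $\ell$; moreover $\ell\star = P_0 P_0^q \subset \pi\star$, so $\{P_0, P_0^q\} \subseteq \C\star \cap \ell\star$ saturates the two intersection points of a line with a conic, forcing $\C \cap \ell = \emptyset$. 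Thus the $q+1$ points of $\C$ lie on $q+1$ distinct spread lines $\ell_1, \ldots, \ell_{q+1}$.

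The core step is to prove $\{\ell_1, \ldots, \ell_{q+1}\}$ is a regulus. I will use the projection $\chi \colon \pi\star \setminus \{P_0^q\} \to g$ defined by $X \mapsto \langle g^q, X\rangle \cap g$. For $X \in \si$ on spread line $[T]$, the extended spread line $[T]\star = TT^q$ lies in $\langle g^q, X\rangle$, so $\chi(X)$ is the point $T \in g$ corresponding to $[T]$ under the $\li \leftrightarrow g$ correspondence; hence $\chi(\C)$ is exactly the set of $g$-points of $\ell_1, \ldots, \ell_{q+1}$. Since $P_0^q \in \C\star$, the restriction $\chi|_{\C\star}$ (suitably extended at $P_0^q$) is a projectivity $\C\star \to g$ between two copies of $\PG(1,q^2)$, defined over $\Fqq$. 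The subset $\C \subset \C\star$ is a Baer subline of the projective line $\C\star \cong \PG(1,q^2)$ (via any $\Fq$-rational parameterization of the $\Fq$-defined conic $\C\star$), and $\Fqq$-projectivities preserve Baer sublines, so $\chi(\C)$ is a Baer subline of $g$. By Result~\ref{BB-Baer}(1) the corresponding Baer subline of $\li$ yields a regulus of $\S$ whose lines are necessarily $\ell_1, \ldots, \ell_{q+1}$. That $\C$ lies in this regulus is immediate, and uniqueness is automatic since any regulus containing $\C$ must include these $q+1$ spread lines.

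The main technical point I anticipate is justifying that $\chi|_{\C\star}$ really is an $\Fqq$-projectivity of projective lines (not merely a set-theoretic bijection), as the Baer-subline-preservation step requires this. I would handle it by factoring $\chi$ as the projection of $\pi\star$ from $P_0^q$ onto the pencil of lines through $P_0^q$ in $\pi\star$ (which restricts to a projectivity of $\C\star$ onto this pencil since $P_0^q \in \C\star$, by the standard fact that projection of a conic from a point on it is a projectivity onto any line), followed by the projective identification of this pencil with $g$ via $m \mapsto \langle g^q, m\rangle \cap g$.
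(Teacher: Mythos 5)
Your proof is correct, and while Part~(1) is essentially the paper's own argument (the paper phrases ``$g\subset\Hstar$'' as ``$g$ is a line of the regulus opposite to $\bstar$'', and likewise intersects with the extended plane of the conic), your Part~(2) takes a genuinely different route. The paper's converse picks three of the spread lines meeting $\C$, forms the unique regulus $\R$ through them, applies Part~(1) to the section $\D=\R\cap\alpha$ to obtain a second $g$-special conic in the same plane, and concludes $\Cstar=\D\star$ because the two conics share the five points $X,\,X^q,\,[P_i]\cap\alpha$; the regulus is then read off from $\D$. You instead project $\Cstar$ from the point $P_0^q=g^q\cap\pi\star$ lying on it, note that this Steiner projection, composed with the identification of the plane pencil through $g^q$ with the points of $g$, is a projectivity defined over $\Fqq$, and hence carries the Baer subline $\C$ of $\Cstar\cong\PG(1,q^2)$ to a Baer subline of $g$, i.e.\ of $\li$; Result~\ref{BB-Baer}(1) then yields the regulus. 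The paper's version is shorter, reuses Part~(1) directly, and needs only ``five points in general position determine a conic'' (though it quietly assumes the section $\R\cap\alpha$ is non-degenerate); yours is longer but more structural, exhibiting explicitly the projectivity that matches points of $\C$ with spread lines, with the only extra inputs being the standard facts that projection of a conic from one of its points is a projectivity and that $\PGL(2,q^2)$ preserves Baer sublines. Your preliminary steps (the unique spread line $\ell$ of $\pi$ extends to $P_0P_0^q$, hence misses $\C$, hence the $q+1$ points of $\C$ lie on $q+1$ distinct spread lines) and the final uniqueness observation are all sound; the only points worth writing out in full are why $g\not\subset\pi\star$ (if it were, conjugation would put the skew line $g^q$ in the same plane) and why $\chi$ restricted to $\Cstar$ is a projectivity, which you already anticipate correctly.
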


\begin{proof}
Let $b$ be a Baer subline of $\li$ in $\PG(2,q^2)$. By Result~\ref{BB-Baer}(1), in $\PG(4,q)$, $[b]$ is a regulus contained in the regular spread $\S$. There are $q^3-q$ planes of $\si$ that meet the regulus $[b]$ is a non-degenerate conic, namely the planes that do not contain a line of $[b]$. Let  $\alpha$ be such a plane, so $\alpha$ contains a unique spread line $[L]$, and $\C=[b]\cap \alpha$ is a non-degenerate conic. In $\PG(4,q^2)$, 
the transversal  $g$ meets each extended spread line, and so $g$ meets at least three lines of the extended regulus $\bstar$, hence $g$ is a line of the opposite regulus. In particular, each point of $g$ lies on one line of $\bstar$. Now 
$\C\star$ is the exact intersection $\bstar\cap\alphastar$, and $\alphastar$ meets $g$ in one point, hence $\C\star$ contains the points $g\cap\alphastar$, $g^q\cap\alphastar$, and so $\C$ is a $g$-special conic. 

Conversely, let $\C$ be a $g$-special conic in $\si$. So $\C$ lies in a plane $\alpha$, moreover, $\alpha$ contains a spread line $[L]$, and in $\PG(4,q^2)$, $\C\star$ contains the points $X=g\cap\Lstar$, $X^q=g^q\cap\Lstar$. 
Let $\K$ be the set of lines of $\S$ that meet $\C$, we need to show that $\K$ is a regulus. Let $[P_1],[P_2],[P_3]$ be three lines of $\K$ and let $\R$ be the unique regulus containing the three lines. By the argument above, $\D=\R\cap\alpha$ is a $g$-special conic, and $\D\star$ contains the points $X,X^q$. So $\C\star$, $\D\star$ have five points in common, namely $X,X^q,[P_i]\cap\alpha$, $i=1,2,3$. Hence $\C\star=\D\star$ and so $\K=\R$. That is, the points of $\C$ lie on lines of a regulus of $\S$, which by Result~\ref{BB-Baer} corresponds to a Baer subline of $\li$ in $\PG(2,q^2)$. 
\end{proof}

Furthermore, the regulus $[b]$ has a relationship to the lines in the hyperbolic congruence of $g,g^q$. 

\begin{theorem}\Label{thm:Baerline-trans} 
Let $b$ be a Baer subline of $\li$, and let $\bar P,\, \bar Q\in\li$ be conjugate with respect to $b$.
 Then  in $\PG(4,q^2)$, the lines $\gPQ$, $\gQP$ are lines of the  regulus  $\bstar$. 
 \end{theorem}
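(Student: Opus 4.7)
My plan is to deduce the statement from Result~\ref{res:circle} together with the duality between a regulus and its opposite in the quadratic extension.

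By Section~\ref{sec:hyp-cong}, the distinct points $\bar P,\bar Q \in \li$ determine a unique partition $\li=\{\bar P,\bar Q\}\cup\ell_1\cup\cdots\cup\ell_{q-1}$ into Baer sublines, each $\ell_i$ having $\bar P,\bar Q$ as a conjugate pair. Any two distinct Baer sublines of $\li$ having $\bar P,\bar Q$ as a conjugate pair must in fact be disjoint: if they shared a point $X$, the composition of the two Baer involutions would fix $X$, $\bar P$ and $\bar Q$, hence be the identity, forcing the two sublines to coincide. Therefore the $\ell_i$ exhaust all Baer sublines of $\li$ with this property, and in particular $b=\ell_j$ for some $j$.

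Translating via Result~\ref{BB-Baer}(1), the partition of $\li$ becomes a partition $\S=\{[P],[Q]\}\cup[\ell_1]\cup\cdots\cup[\ell_{q-1}]$ of the spread into two lines and $q-1$ reguli, with $[b]=[\ell_j]$. Applying Result~\ref{res:circle}, the set $\S'=\{[P],[Q],[\ell_1]',\ldots,[\ell_{q-1}]'\}$ is a regular spread of $\si$ whose transversals in $\PG(3,q^2)$ are precisely $\gPQ$ and $\gQP$. In particular $\gPQ$ meets every line of $\S'$, and hence meets each of the $q+1$ pairwise skew lines of $[b]'=[\ell_j]'$.

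To finish, any line of $\PG(3,q^2)$ meeting three pairwise skew lines of a regulus of $\PG(3,q)$ must lie in the extended opposite regulus of that regulus in $\PG(3,q^2)$. Applied to $\gPQ$ and the regulus $[b]'$, this places $\gPQ$ in the regulus of $\PG(3,q^2)$ opposite to $[b]'$, which is exactly $\bstar$. The same argument applied to the other transversal of $\S'$ gives $\gQP\in\bstar$.

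The main obstacle is really a bookkeeping one, namely identifying the given Baer subline $b$ with one of the $\ell_j$ produced by the circle-geometry partition; once this is handled via the disjointness-of-involutions argument, Result~\ref{res:circle} and the standard regulus/opposite-regulus duality in the quadratic extension deliver the rest.
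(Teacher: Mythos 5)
Your proposal is correct and follows essentially the same route as the paper: identify $b$ with one of the reguli in the circle-geometry partition determined by $\bar P,\bar Q$, invoke Result~\ref{res:circle} to realise $\gPQ$, $\gQP$ as transversals of the derived spread $\{[P],[Q],\R_1',\ldots,\R_{q-1}'\}$, and conclude via the transversal/opposite-regulus duality in $\PG(3,q^2)$. The only difference is that you explicitly justify two steps the paper leaves implicit (the involution argument identifying $b$ with some $\ell_j$, and the three-skew-lines argument placing $\gPQ$ in $\bstar$), both of which are sound.
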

 
 \begin{proof}
 Let $\bar P,\, \bar Q$ be two points of $\li$ that are conjugate with respect to a  Baer subline $b\subset \li$. 
By Result~\ref{BB-Baer}, in $\PG(4,q)$, $[b]$ is a  regulus of $\S$.
By Result~\ref{res:circle}, the unique partition of $\S\setminus\{[P],[Q]\}$ into reguli contains the regulus $[b]$; and   in $\PG(4,q^2)$, the lines $\gPQ,\gQP$ meet each line of the regulus opposite to $\bstar$. Hence  the lines $\gPQ,\gQP$
are lines of the regulus $\bstar$.
\end{proof}

\begin{remark} {\rm Given a Baer subline $b$ of $\li$, the points of $\li\setminus\{b\}$ can be partitioned into pairs of points $\{\bar P_i,\bar Q_i\}$ which are 
conjugate with respect to $b$. Hence the $q^2-q$ lines $\gPiQi,(\gPiQi)^q$ are exactly the lines of $\PG(4,q^2)$ in the regulus $\bstar$ that are not lines of $\PG(4,q)$.
}
\end{remark}

We now consider a Baer subplane $\B$ of $\PG(2,q^2)$ secant to $\li$. By Result~\ref{BB-Baer}, $[\B]$ is a plane of $\PG(4,q)$, and the line $[\B]\cap\si$ meets $q+1$ lines of $\S$ which form a regulus denoted by $\R$. As noted above, in $\PG(4,q^2)$, the transversals $g,g^q$ are lines of the regulus opposite to $\R$. 
Moreover, by Theorem~\ref{thm:Baerline-trans} 
the extended regulus $\R\star$ contains the line $\gPQ$ where the corresponding points $\bar  P,\bar  Q\in\li$ are conjugate with respect to $\B$.

\begin{corollary}\Label{cor:Baerplane-trans} 
Let $\B$ be a Baer subplane of $\PG(2,q^2)$ that is secant to $\li$, and let $\bar  P,\bar  Q\in\li$ be conjugate with respect to $\B$.
 Then  in $\PG(4,q^2)$, the lines $\gPQ$, $\gQP$ meet the plane $\Bstar$.
\end{corollary}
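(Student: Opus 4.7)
The plan is to reduce to the Baer subline case already handled by Theorem~\ref{thm:Baerline-trans}, and then exploit the regulus/opposite regulus incidence structure to locate the intersection point inside the plane $\Bstar$.

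First I would set $b=\B\cap\li$, which is a Baer subline because $\B$ is secant to $\li$. Recall from Section~2.1 that two points of $\li$ are conjugate with respect to $\B$ if and only if they are conjugate with respect to $b$. So the hypothesis gives that $\bar P,\bar Q$ are conjugate with respect to $b$, and Theorem~\ref{thm:Baerline-trans} then supplies the main input: in $\PG(4,q^2)$ the lines $\gPQ$ and $\gQP$ lie in the extended regulus $\bstar=\R\star$.

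Next I would identify $[\B]\cap\si$ as a line of the opposite regulus $\R'$ of $\R$ in $\si\cong\PG(3,q)$. By Result~\ref{BB-Baer}(3), $[\B]$ is a plane not containing any spread line, so the line $[\B]\cap\si$ meets each of the $q+1$ lines of $\R$ in a single point but is not itself a line of $\R$. This forces $[\B]\cap\si\in\R'$. Extending to $\PG(3,q^2)$, the line $\Bstar\cap\sistar$ belongs to the regulus opposite to $\R\star$.

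Finally, since $\gPQ$ lies in $\R\star$ and $\Bstar\cap\sistar$ lies in the opposite regulus, these two lines meet in a unique point, which by definition lies in $\Bstar\cap\sistar\subset\Bstar$. Hence $\gPQ$ meets $\Bstar$, and the same argument applied to $\gQP$ completes the proof. I do not expect a real obstacle here: the only point requiring slight care is the behaviour of the regulus $\R$ and its opposite under the quadratic extension from $\si$ to $\sistar$, but this is a standard feature of reguli, and the key computational content has already been absorbed into Theorem~\ref{thm:Baerline-trans}.
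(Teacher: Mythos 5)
Your argument is correct and is essentially the paper's own: the paper derives this corollary from Theorem~\ref{thm:Baerline-trans} by observing that $[\B]\cap\si$ is a transversal line of the regulus $\R=[b]$ (so its extension lies in the regulus opposite to $\R\star$ and hence meets the lines $\gPQ$, $\gQP$ of $\R\star$). No gap; the reduction to the subline case via the conjugacy remark in Section~2.1 is exactly the intended route.
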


\section{Conics of $\PG(2,q^2)$}\Label{sec:adult-conic}

In \cite{BJQ}, it is shown that   a non-degenerate conic $\mathscr O$ in $\PG(2,q^2)$ corresponds in $\PG(4,q)$ to   the    intersection of two quadrics. 
 Moreover, this correspondence is exact-at-infinity: that is, an affine point $A\in\PG(2,q^2)\setminus\li$ lies in $\mathscr O$ if and only if  the affine point $[A]\in\PG(4,q)\setminus\si$ lies in $[\mathscr O]=\Q_1\cap\Q_2$; and  a point $\bar T\in\li$ lies in $\mathscr O$ if and only if the spread line $[T]$ is contained in $[\mathscr O]=\Q_1\cap\Q_2$. So the set $[\mathscr O]=\Q_1\cap\Q_2$ meets $\si$ either in the empty set, or in  1 or 2 spread lines.
We determine the relationship of $[\mathscr O]$ with the transversals $g,g^q$ of the regular spread $\S$. 

The arguments used  are coordinate based. A conic $\mathscr O$ has equation $f(x,y,z)=0$ where $f$ is a homogeneous equation of degree two over ${{\mathbb F}_{q^2}}$. Using the Bruck-Bose map, this can be written as 
$f_\infty(x_0,x_1,y_0,y_1,z)+\tau f_0(x_0,x_1,y_0,y_1,z)=0$ where $f_\infty=0$, $f_0=0$ are homogeneous quadratic equations over ${{\mathbb F}_q}$, and so they are equations of quadrics $\Q_\infty$, $\Q_0$ in $\PG(4,q)$, hence $[\mathscr O]=\Q_\infty\cap\Q_0$.
Moreover, $[\mathscr O]$ is contained in the pencil of quadrics $\{\Q_t =t  \Q_\infty+\Q_0, t \in{{\mathbb F}_q}\cup\{\infty\}\}$ where
$\Q_t$ has equation $f_t =t  f_\infty+f_0=0$.
 There is a natural extension to $\PG(4,q^2)$ and to $\PG(4,q^4)$, namely $\mathscrOstar=\Qonestar\cap\Qtwostar$ and $\mathscrOstarstar=\Qonestarstar\cap\Qtwostarstar$.
In order to study subconics in Baer subplanes, we will need a full analysis of 
 how these sets meet the hyperplane at infinity, which we give in this section.
We first show that none of the quadrics $\Qtstar$, $t\in {{\mathbb F}_q}\cup\{\infty\}$, contain $g$, and so $\mathscrOstar$ does not contain $g$.

 \begin{theorem}\Label{adult-conic-g} Let $\mathscr O$ be a non-degenerate conic in $\PG(2,q^2)$, so $[\mathscr O]=\Q_\infty\cap\Q_0$. In $\PG(4,q^2)$, the  quadric $\Qtstar=t \Qonestar+\Qtwostar$, $t \in{{\mathbb F}_q}\cup\{\infty\}$, meets $g$ in $0$, $1$ or $2$ points, according to whether $\mathscr O$ meets $\li$ in $0$, $1$ or $2$ points respectively.\end{theorem}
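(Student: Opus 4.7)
The plan is to work in coordinates and compute $f_t(T)$ directly for a general point $T$ of $g$, matching it against the vanishing of $f$ on the corresponding point of $\li$. Parametrise the points of $g$ as $T=\mu A_0+\lambda A_1=(\mu\tau^q,-\mu,\lambda\tau^q,-\lambda,0)$ with $(\mu:\lambda)\in\PG(1,q^2)$; by the correspondence of Section~\ref{sec:background-coord}, this $T$ corresponds to $\bar T=(\mu,\lambda,0)\in\li$, and $\bar T\in\mathscr O$ iff $f(\mu,\lambda,0)=0$. Once one shows that $f_t(T)$ is a \emph{nonzero} scalar multiple of $f(\mu,\lambda,0)$ for every $t\in{{\mathbb F}_q}\cup\{\infty\}$, the theorem follows at once: $T\mapsto\bar T$ restricts to a bijection $\Qtstar\cap g\leftrightarrow\mathscr O\cap\li$, and a non-degenerate conic meets $\li$ in at most two points.

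To extract $f_\infty(T)$ and $f_0(T)$, the key tool is the defining identity $f(x_0+x_1\tau,y_0+y_1\tau,z)=f_\infty(x_0,x_1,y_0,y_1,z)+\tau f_0(x_0,x_1,y_0,y_1,z)$, regarded as an identity of polynomials over ${{\mathbb F}_{q^2}}$. Substituting the coordinates of $T$ one computes $x_0+x_1\tau=\mu(\tau^q-\tau)$, $y_0+y_1\tau=\lambda(\tau^q-\tau)$, $z=0$, so homogeneity of $f$ gives the first relation
$$f_\infty(T)+\tau f_0(T)=(\tau^q-\tau)^2 f(\mu,\lambda,0).$$
A single equation cannot separate $f_\infty(T)$ from $f_0(T)$, so the next step is to produce a second relation by evaluating the same identity at the Frobenius conjugate $T^q\in g^q$. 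There the Bruck-Bose substitutions collapse: $x_0+x_1\tau=\mu^q\tau-\mu^q\tau=0$ and similarly $y_0+y_1\tau=0$, giving $f_\infty(T^q)+\tau f_0(T^q)=0$. Since $f_\infty$ and $f_0$ have ${{\mathbb F}_q}$-coefficients, this reads $f_\infty(T)^q+\tau f_0(T)^q=0$, and the $q$-th power Frobenius delivers the second relation
$$f_\infty(T)+\tau^q f_0(T)=0.$$

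Solving this $2\times 2$ linear system is immediate: $f_0(T)=-(\tau^q-\tau)f(\mu,\lambda,0)$ and $f_\infty(T)=\tau^q(\tau^q-\tau)f(\mu,\lambda,0)$, whence $f_t(T)=(t\tau^q-1)(\tau^q-\tau)f(\mu,\lambda,0)$ for $t\in{{\mathbb F}_q}$, and $f_\infty(T)$ itself covers the case $t=\infty$. Because $\tau\notin{{\mathbb F}_q}$, each of the factors $\tau^q-\tau$, $\tau^q$, and $t\tau^q-1$ is nonzero for every $t\in{{\mathbb F}_q}$, so $T\in\Qtstar$ iff $f(\mu,\lambda,0)=0$ iff $\bar T\in\mathscr O$, as required. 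The main (modest) obstacle is spotting that a single evaluation at $T$ gives only one linear relation between $f_\infty(T)$ and $f_0(T)$; the trick is that evaluating at the conjugate point $T^q$ forces the Bruck-Bose substitutions to vanish, and then Galois descent supplies the second, decoupling relation essentially for free.
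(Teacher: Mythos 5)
Your proof is correct, and it reaches the conclusion by a genuinely more uniform route than the paper. The paper's proof is also a coordinate computation of $f_t$ on the points of $g$, but it first uses the transitivity of $\PGL(3,q^2)$ to reduce to canonical conic equations and then treats the tangent, secant and exterior cases (and odd/even $q$) separately, writing out $f_\infty$ and $f_0$ explicitly in each case and substituting the points $G_\beta=\beta A_0+A_1$ directly. Your argument replaces all of that with a single computation for an arbitrary $f$: the evaluation of the identity $f(x_0+x_1\tau,y_0+y_1\tau,z)=f_\infty+\tau f_0$ at $T$ gives one linear relation, and the observation that the substitutions collapse to zero at the conjugate point $T^q$ — combined with Galois descent, using that $f_\infty,f_0$ have $\Fq$-coefficients and $f_\infty(T),f_0(T)\in\Fqq$ — gives the decoupling relation $f_\infty(T)+\tau^q f_0(T)=0$. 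The resulting formula $f_t(T)=(t\tau^q-1)(\tau^q-\tau)f(\mu,\lambda,0)$ specialises exactly to the values $f_\infty(G_\beta)=\tau^q(\tau^q-\tau)$, $f_0(G_\beta)=\tau-\tau^q$ that the paper computes for $y^2=xz$, so the two arguments agree where they overlap. What your approach buys is the elimination of the case analysis and of the appeal to projective equivalence, plus a sharper statement for free: the points of $\Qtstar\cap g$ are identified precisely as the images of $\mathscr O\cap\li$ under the correspondence $\bar T\leftrightarrow T$, which is exactly the content the paper later extracts as Corollary~\ref{cor:PcorrPsigma}. The only price is that one must note the decomposition $f=f_\infty+\tau f_0$ is a polynomial identity over $\Fqq$ (so it may be evaluated at points with coordinates in any extension), which you use implicitly and correctly.
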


\begin{proof}  
Consider first the case when $\mathscr O$ is tangent to $\li$.  The group $\PGL(3,q^2)$ is transitive on  non-degenerate conics, and the subgroup fixing a  non-degenerate conic $\mathscr O$ is transitive on the tangent lines of $\mathscr O$. Hence we can  without loss of generality,  prove the result for the conic $\mathscr O$ of equation
$y^2=xz$ in $\PG(2,q^2)$, which meets $\li$ in one (repeated) point $\bar T=(1,0,0)$.
The affine point $(x,y,1)=(x_0+x_1\tau,y_0+y_1\tau,1)$ is on $\mathscr O$ if $(y_0+y_1\tau)^2=x_0+x_1\tau$, that is
$(y_0^2+y_1^2t_0-x_0)+(y_1^2t_1+2y_0y_1-x_1)\tau=0$.
The solutions $(x_0,x_1,y_0,y_1,1)\in\PG(4,q)$  to this are the affine points in $[\mathscr O]$. 
That is, $[\mathscr O]$ is  the intersection of the two quadrics $\Q_\infty,\Q_0$ with homogeneous equations $f_\infty=0$, $f_0=0$ respectively, where 
\begin{eqnarray}\label{eqn:quad-tgt}
f_\infty=y_0^2+y_1^2t_0-x_0z\quad\mbox{and}\quad  f_0=y_1^2t_1+2y_0y_1-x_1z.
\end{eqnarray}
Note that the intersection $[\mathscr O]=\Q_\infty\cap\Q_0$ is exact on $\si$: both $\Q_\infty$ and $\Q_0$ contain the spread line $[T]=\{(a,b,0,0,0)\st a,b\in{{\mathbb F}_q}\}$, and these are the only points of $\si$ contained in both $\Q_\infty$ and $\Q_0$. Also note that  in $\PG(4,q^2)$, $\Qonestar$ and $\Qtwostar$ both contain the extended spread line $\Tstar$, and so both contain at least one point  of $g$, namely $\Tstar\cap g=A_0$. Also, $[\mathscr O]$ lies in the pencil of quadrics $\{\Q_t=t  \Q_\infty+\Q_0\st
t \in{{\mathbb F}_q}\cup\{\infty\}\}$ where
$\Q_t$ has equation $f_t =t  f_\infty+f_0=0$. 
Recall that the transversal $g$ of $\S$ consists of the points  $G_{\beta}=\beta A_0+A_1=\big(\beta\tau^q,\ -\beta,\ \tau^q,\ -1,\ 0\big)$ for $\beta\in{{\mathbb F}_{q^2}}\cup\{\infty\}$. 
For $\beta\in{{\mathbb F}_{q^2}}$, we have  $f_\infty(G_\beta)=\tau^q(\tau^q-\tau)$ and $f_0(G_\beta)=\tau-\tau^q$. Let $f_t=t f_\infty+f_0$, then $f_t(G_\beta)=(\tau^q-\tau)(t \tau^q-1)$ which is never zero when $t \in{{\mathbb F}_q}$. Hence  $G_\infty=A_0$  is the only point of $g$ contained in the quadric $\Q_t $. Similarly, $A_0^q$  is the only point of the (other) transversal $g^q$ contained in the quadric $\Q_t $. 
That is, when $\mathscr O$ is tangent to $\li$, the quadrics $\Qtstar$ each meet $g$ in one point, namely $\Tstar\cap g=A_0$.
 A similar argument using the conic with equation $
f(x,y,z)=x^2-\delta y^2+z^2
$, $\delta\in{{\mathbb F}_{q^2}}\setminus\{0\}$ for $q$   odd, and  $\delta x^2+y^2+z^2+yx=0$, $\delta\in{{\mathbb F}_{q^2}}$ for $q$ even completes the other cases. 
\end{proof}

The  proof of Theorem~\ref{adult-conic-g}, and the  1-1  correspondence between  points $\bar P$ of $\li$ and points $\Pbar=[P]\star\cap g$ of the transversal $g$, allow us to identify the points of the quadric $\Qtstar$ on  $g$.

\begin{corollary} \Label{cor:PcorrPsigma}
Let $\mathscr O$ be a non-degenerate conic in $\PG(2,q^2)$, then 
\begin{enumerate}
\item $\bar P\in\mathscr O\cap\li$  if and only if in $\PG(4,q^2)$, $\Pbar\in g$;
\item  $\bar P$ is a point in the intersection of the extension of $\mathscr O$ and the extension of $\li$  to $\PG(2,q^4)$ if and only if in $\PG(4,q^4)$,
  $P\in \gstar\setminus g$.
  \end{enumerate}
\end{corollary}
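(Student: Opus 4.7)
The plan is to extract the corollary from the proof of Theorem~\ref{adult-conic-g}, tracking how the bijection $(\alpha,1,0) \leftrightarrow \alpha A_0 + A_1$ between points of $\li$ and points of $g$ (together with its natural extension between the quartic extensions of $\li$ and $\gstar$) interacts with the quadrics of the pencil containing $[\mathscr O]$.

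For part~1, rather than redoing the three-case analysis of Theorem~\ref{adult-conic-g}, I would argue uniformly via exactness at infinity. If $\bar P \in \mathscr O \cap \li$, then by the exact-at-infinity correspondence $[\mathscr O] = \Q_\infty \cap \Q_0$ recalled at the start of this section, the spread line $[P]$ is contained in $\Q_\infty \cap \Q_0$, and therefore in every quadric $\Q_t$ of the pencil. Extending to $\PG(4,q^2)$ gives $\Pstar \subset \Qtstar$ for every $t \in {\mathbb F}_q \cup \{\infty\}$, so $\Pbar = \Pstar \cap g$ lies in each $\Qtstar \cap g$, and hence in $\mathscrOstar \cap g$. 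This exhibits $|\mathscr O \cap \li|$ distinct points of $\mathscrOstar \cap g$. On the other hand, $\mathscrOstar \cap g \subseteq \Qtstar \cap g$ for any fixed $t$, and Theorem~\ref{adult-conic-g} shows $|\Qtstar \cap g| = |\mathscr O \cap \li|$. Cardinality forces equality, yielding the biconditional in part~1.

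For part~2, only the case where $\mathscr O$ is exterior to $\li$ contributes new points: in the tangent and secant cases the extension of $\mathscr O$ to $\PG(2,q^4)$ meets the extension of $\li$ in the same points already present in $\PG(2,q^2)$, which are handled by part~1. For an exterior conic the uniform argument of part~1 breaks down, since there is no spread line $[P] \subset \PG(4,q)$ attached to a point $\bar P \not\in \PG(2,q^2)$. Instead, I would imitate the coordinate computation in the proof of Theorem~\ref{adult-conic-g}, but now evaluating $f_\infty$ and $f_0$ at points $G_\beta = \beta A_0 + A_1$ of $\gstar$ with $\beta$ ranging over ${\mathbb F}_{q^4}$. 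Matching the resulting polynomial condition $f_t(G_\beta) = 0$ against the equation $f(\alpha,1,0) = 0$ defining the extension of $\mathscr O \cap \li$ to $\PG(2,q^4)$ shows that the two conjugate values of $\beta \in {\mathbb F}_{q^4} \setminus \Fqq$ for which $G_\beta$ lies on every $\Qtstarstar$ coincide precisely with the $\alpha$-coordinates of the two points of the extended intersection. Under the extended bijection $(\alpha,1,0) \leftrightarrow \alpha A_0 + A_1$, these correspond to the two points of $\mathscrOstarstar \cap (\gstar \setminus g)$, completing part~2.

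The principal obstacle is part~2, where the absence of a bona fide spread line for $\bar P \not\in \PG(2,q^2)$ forces a direct coordinate calculation in place of the clean pencil-of-quadrics argument that succeeds in part~1. Fortunately, this is a routine adaptation of the exterior-case analysis already contained in the proof of Theorem~\ref{adult-conic-g}, extended to the quartic extension ${\mathbb F}_{q^4}$.
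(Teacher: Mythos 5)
Your proposal is correct in substance, and for part~1 it takes a genuinely different route from the paper. The paper obtains the corollary by reading it directly off the coordinate computations in the proof of Theorem~\ref{adult-conic-g}: for each canonical conic equation one evaluates $f_t$ at the points $G_\beta=\beta A_0+A_1$ of $g$ and observes that the zeros are exactly the points corresponding to $\mathscr O\cap\li$ under the bijection $\bar T=(\delta,1,0)\leftrightarrow\delta A_0+A_1$. Your part~1 instead combines the exact-at-infinity fact ($\bar P\in\mathscr O\cap\li$ forces $[P]\subset\Q_\infty\cap\Q_0$, hence $\Pstar\subset\Qtstar$ and $\Pbar\in\mathscrOstar\cap g$) with the cardinality statement of Theorem~\ref{adult-conic-g}; this is coordinate-free, handles all three cases uniformly, and uses only the statement rather than the internals of that theorem's proof. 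What it buys is a cleaner argument; what it costs is nothing, since the counting step closes the converse. For part~2 you revert to the paper's (largely omitted) coordinate computation over ${\mathbb F}_{q^4}$, which is the same approach the authors intend.

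Two soft spots in part~2 deserve explicit attention. First, for tangent and secant conics you note that the extension of $\mathscr O$ acquires no new points of $\li$ over ${\mathbb F}_{q^4}$, but the ``if'' direction of part~2 also requires that $\mathscrOstarstar$ acquires no new points of $\gstar\setminus g$; this does follow, because $g\not\subset\Qtstar$ by Theorem~\ref{adult-conic-g}, so $f_t$ restricts to a nonzero binary quadratic on $\gstar$ whose at most two zeros (with multiplicity) are already accounted for on $g$ by part~1 --- but it should be said. Second, the ``matching'' in the exterior case is not purely formal: $G_\beta\in\mathscrOstarstar$ amounts to the \emph{two} conditions $f_\infty(G_\beta)=f_0(G_\beta)=0$, whereas $(\beta,1,0)$ lying on the extended conic is the \emph{single} condition $f(\beta,1,0)=0$. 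The identity $f(x_0+x_1\tau,y_0+y_1\tau,z)=f_\infty+\tau f_0$ splits into two ${\mathbb F}_q$-conditions only when the arguments lie in ${\mathbb F}_q$; at points of $\gstar$ with coordinates in ${\mathbb F}_{q^4}$ one inclusion (two conditions imply one, via homogeneity) is immediate, but the converse --- that $f(\beta,1,0)=0$ forces both $f_\infty(G_\beta)$ and $f_0(G_\beta)$ to vanish --- is precisely the content of the claim and must be verified by the explicit calculation (or a counting argument pinning $\Qtstarstar\cap\gstar$ to exactly two points). This is the same calculation the paper itself omits, so you are no worse off than the authors, but it is the one step you should not wave through as ``routine matching.''
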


Next we consider the set $[\mathscr O]$ extended to $\PG(4,q^2)$ and $\PG(4,q^4)$, and determine the exact intersection with the  hyperplane at infinity. 

\begin{theorem}\Label{thm:Ccapsi}
 Let $\mathscr O$ be a non-degenerate conic in $\PG(2,q^2)$.

\begin{enumerate}
\item Suppose $\mathscr O$ is secant to $\li$, so $\mathscr O\cap \li=\{\bar P,\, \bar Q\}$, then 
\begin{enumerate}
\item in $\PG(4,q)$, $[\mathscr O]\cap\si=\{[P],[Q]\}$;
\item in $\PG(4,q^2)$, $\mathscrOstar\cap\sistar=\{\Pstar,\Qptstar,\gPQ,\gQP\}$; 
\item in $\PG(4,q^4)$, $\mathscrOstarstar\cap\sistarstar=\{\Pstarstar,\ \Qptstarstar,\ (\gPQ)^\blackstar,\ (\gQP)^\blackstar \}$.
\end{enumerate}
\item Suppose $\mathscr O$ is tangent to $\li$, so $\mathscr O\cap \li=\{\bar P\}$, then 
\begin{enumerate}
\item in $\PG(4,q)$, $[\mathscr O]\cap\si=\{[P]\}$;
\item in $\PG(4,q^2)$, $\mathscrOstar\cap\sistar=\{\Pstar\}$,
\item in $\PG(4,q^4)$, $\mathscrOstarstar\cap\sistarstar=\{\Pstarstar\}$.
\end{enumerate}
\item Suppose $\mathscr O$ is exterior to $\li$, so in the extension to $\PG(2,q^4)$,  the extension of $\mathscr O$ meets the extension of  $\li$ in  two points $\{\bar P,\bar P^{q^2}\}$. Then 
\begin{enumerate}
\item in $\PG(4,q)$, $[\mathscr O]\cap\si=\emptyset$;
\item in $\PG(4,q^2)$, $\mathscrOstar\cap\sistar=\emptyset$;
\item in $\PG(4,q^4)$, $\mathscrOstarstar\cap\sistarstar=\{\elllP,\ \elllP^q,\ \elllP^{q^2},\ \elllP^{q^3}\}$, where $\elllP=PP^q$.
\end{enumerate}
\end{enumerate}
\end{theorem}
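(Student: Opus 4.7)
The plan is to combine the pencil-of-quadrics setup $\mathscrOstar=\Qonestar\cap\Qtwostar$ with the information in Theorem~\ref{adult-conic-g} and Corollary~\ref{cor:PcorrPsigma} about how each $\Qtstar=t\Qonestar+\Qtwostar$ meets the transversal $g$, and to verify each case using canonical forms for $\mathscr O$ in the spirit of Theorem~\ref{adult-conic-g}. The three statements (a) in $\PG(4,q)$ are immediate from the exact-at-infinity property of the Bruck-Bose representation quoted from \cite{BJQ} at the start of Section~\ref{sec:adult-conic}: a point $\bar T\in\li$ lies in $\mathscr O$ iff the spread line $[T]$ is contained in $[\mathscr O]$, and no other point of $\si$ lies in $[\mathscr O]$.

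For the secant case (1b), the two extended quadric surfaces $\Qonestar\cap\sistar$ and $\Qtwostar\cap\sistar$ in $\sistar\cong\PG(3,q^2)$ both contain the disjoint lines $\Pstar$ and $\Qptstar$, so their intersection is a degree-$4$ curve containing $\Pstar\cup\Qptstar$ with residual of degree $2$. By Theorem~\ref{adult-conic-g} and Corollary~\ref{cor:PcorrPsigma}(1), every quadric $\Qtstar$ in the pencil contains the four points $\Pbar,\Qbar\in g$ and $\Pqbar,\Qqbar\in g^q$, so the transversal lines $\gPQ$ (through $\Pbar,\Qqbar$) and $\gQP$ (through $\Pqbar,\Qbar$) are the natural candidates for the residual. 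Using a canonical form for a secant conic analogous to those in the proof of Theorem~\ref{adult-conic-g} and the explicit coordinates of $g,g^q$ from Section~\ref{sec:background-coord}, one verifies by direct substitution that a third point on each of $\gPQ,\gQP$ satisfies $f_\infty=0$ and $f_0=0$; a line meeting a quadric in three points lies in it, so both $\gPQ$ and $\gQP$ are in $\mathscrOstar$, and the degree count then forces equality. Since all four lines $\Pstar,\Qptstar,\gPQ,\gQP$ are already defined over $\mathbb F_{q^2}$, part (1c) in $\PG(4,q^4)$ follows at once by further extension.

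For the tangent case, (2a) gives $[\mathscr O]\cap\si=\{[P]\}$, while Theorem~\ref{adult-conic-g} shows every $\Qtstar$ meets $g$ only at $\Pbar=\Pstar\cap g$. Any residual component of $\mathscrOstar\cap\sistar$ outside $\Pstar$ would contribute an extra intersection of some $\Qtstar$ with $g$, contradicting Theorem~\ref{adult-conic-g}; hence $\mathscrOstar\cap\sistar=\Pstar$ as point sets, and the same argument runs in $\PG(4,q^4)$. For the exterior case, Theorem~\ref{adult-conic-g} gives that each $\Qtstar$ misses $g$ over $\mathbb F_{q^2}$, forcing $\mathscrOstar\cap\sistar=\emptyset$; over $\mathbb F_{q^4}$, the two points $P,P^{q^2}\in\gstar\setminus g$ (with $q$-conjugates $P^q,P^{q^3}\in\gqstar$) supplied by Corollary~\ref{cor:PcorrPsigma}(2) play the roles of $\Pbar,\Qbar,\Pqbar,\Qqbar$ in the secant-case argument, identifying the four residual lines as $\elllP,\elllP^q,\elllP^{q^2},\elllP^{q^3}$ where $\elllP=PP^q$. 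The main obstacle is the coordinate verification in (1b) that $\gPQ$ and $\gQP$ lie on every quadric in the pencil: this is mechanical but delicate, and following Theorem~\ref{adult-conic-g} it must be carried out separately in odd and even characteristic using the appropriate canonical forms.
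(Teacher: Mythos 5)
Your overall strategy is the same as the paper's: the $\PG(4,q)$ statements follow from the exact-at-infinity property of $[\mathscr O]=\Q_\infty\cap\Q_0$ quoted from \cite{BJQ}, and the extensions are handled by taking the canonical conic equations from the proof of Theorem~\ref{adult-conic-g} and computing the base curve of the pencil restricted to the hyperplane at infinity. For the secant case your treatment is sound and essentially what the paper does: you identify the candidate lines $\gPQ,\gQP$ from the points of $g,g^q$ supplied by Theorem~\ref{adult-conic-g} and Corollary~\ref{cor:PcorrPsigma}, verify by substitution that they lie on both quadrics, and close with a degree count where the paper instead cites the classification of pencils of quadric surfaces in \cite[Table 2]{BruenHirschfeld}; both versions defer the same mechanical substitution.

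However, your arguments for the tangent case and for part 3(b) contain a genuine gap. You claim that a residual component of $\mathscrOstar\cap\sistar$ lying outside $\Pstar$ ``would contribute an extra intersection of some $\Qtstar$ with $g$,'' and similarly that each $\Qtstar$ missing $g$ ``forces'' $\mathscrOstar\cap\sistar=\emptyset$. Neither inference is valid: Theorem~\ref{adult-conic-g} only controls $\Qtstar\cap g$, and a line, conic or twisted-cubic component of the base curve in the $3$-space $\sistar$ can perfectly well be disjoint from the external line $g$ (indeed, in case 3(c) the four lines $\elllP,\elllP^q,\elllP^{q^2},\elllP^{q^3}$ that do arise meet $\gstar$ only in points of $\gstar\setminus g$). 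So knowing how each quadric of the pencil meets $g$ does not by itself pin down $\mathscrOstar\cap\sistar$. These cases need the same kind of work as the secant case: either carry out the coordinate computation with the tangent and exterior canonical forms (for the tangent form in \eqref{eqn:quad-tgt} one finds that both $\Q_\infty\cap\si$ and $\Q_0\cap\si$ are plane pairs through $[P]$, whence the intersection is $\Pstar$ as a point set), or appeal to the classification of pencils of quadric surfaces as the paper does. Once 3(c) is established over $\Fqq[2]$... over ${\mathbb F}_{q^4}$, statement 3(b) does follow, but by observing that the four lines are pairwise skew and permuted without fixed lines by $x\mapsto x^{q^2}$, hence carry no ${\mathbb F}_{q^2}$-rational points --- not from the behaviour of the $\Qtstar$ on $g$.
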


\begin{proof} As noted above,  $[\mathscr O]=\Q_\infty\cap\Q_0$, for quadrics $\Q_\infty,\Q_0$, and this correspondence is exact, so $[\mathscr O]$ meets $\si$ in either  the empty set, or in  1 or 2 spread lines (corresponding  respectively to $\mathscr O$ meeting $\li$ in 0, 1 or 2 points).  
The cases $\mathscr O$ tangent, secant and exterior to $\li$, $q$ odd and even, are proved separately using  the same conic equations as in the proof of Theorem~\ref{adult-conic-g}.
We omit the calculations, noting that we rely on \cite[Table 2]{BruenHirschfeld} to show that the intersection of the two quadrics in the 3-space $\sistar$ is a set of four lines, possibly repeated. 
\end{proof}

We have shown that  in $\PG(4,q^2)$, the set $\mathscrOstar$ contains an extended spread line $\Pstar$ if and only if in $\PG(2,q^2)$, the point $\bar P\in\mathscr O\cap\li$. We will need the next corollary which considers whether the set $\mathscrOstar$ can contain a point of any other extended spread line.

\begin{corollary}\Label{adult-conic-T} Let $\mathscr O$ be a non-degenerate conic in $\PG(2,q^2)$. Let $\bar L$ be a point of $\li$ not in $\mathscr O$. In $\PG(4,q^2)$, the corresponding extended spread line $\Lstar$ is disjoint from $\mathscrOstar$.
\end{corollary}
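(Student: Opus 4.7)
The plan is to exploit the inclusion $\Lstar\subseteq\sistar$, which forces
\[
\Lstar\cap\mathscrOstar \;\subseteq\; \Lstar\cap\bigl(\mathscrOstar\cap\sistar\bigr),
\]
and then apply Theorem~\ref{thm:Ccapsi}, which has already computed $\mathscrOstar\cap\sistar$ in all three cases. So my proof would split according to whether $\mathscr O$ is exterior, tangent, or secant to $\li$.

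The first two cases are essentially immediate. If $\mathscr O$ is exterior to $\li$ then $\mathscrOstar\cap\sistar=\emptyset$ by Theorem~\ref{thm:Ccapsi}(3b) and there is nothing to prove. If $\mathscr O$ is tangent to $\li$ at $\bar P$ then Theorem~\ref{thm:Ccapsi}(2b) gives $\mathscrOstar\cap\sistar\subseteq\Pstar$; since $\bar L\neq\bar P$, the spread elements $[L]$ and $[P]$ are distinct lines of $\S$ and therefore skew in $\si\cong\PG(3,q)$, and because they are defined over $\Fq$ their extensions to $\sistar\cong\PG(3,q^2)$ remain skew. Hence $\Lstar\cap\Pstar=\emptyset$ and we are done.

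The heart of the argument, and the step I expect to be the main obstacle, is the secant case. Here Theorem~\ref{thm:Ccapsi}(1b) gives $\mathscrOstar\cap\sistar\subseteq\Pstar\cup\Qptstar\cup\gPQ\cup\gQP$. The previous paragraph's skew-extension argument handles $\Pstar$ and $\Qptstar$ (using $\bar L\neq\bar P,\bar Q$). For the remaining two lines I would appeal to the hyperbolic congruence picture from Section~\ref{sec:hyp-cong}: the extended spread line $\Lstar$ and the lines $\gPQ,\gQP$ all belong to the hyperbolic congruence of $g$ and $g^q$, and two distinct lines of this congruence can meet only on $g$ or on $g^q$ (because $g,g^q$ are skew, so two coplanar lines of the congruence would force $g$ and $g^q$ to lie in a common plane). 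Since $\Lstar\cap g=L$ and $\Lstar\cap g^q=L^q$, while $\gPQ\cap g=P$ and $\gPQ\cap g^q=Q^q$, the hypothesis $\bar L\neq\bar P,\bar Q$ rules out any intersection point of $\Lstar$ with $\gPQ$, and symmetrically with $\gQP$. Combining all four pieces yields $\Lstar\cap\mathscrOstar=\emptyset$, which completes the proof. A minor bookkeeping point, but not a serious one, is that Theorem~\ref{thm:Ccapsi} allows the ``four lines'' to coincide or be repeated; since the argument disposes of each of the four named lines individually, any repetitions cause no trouble.
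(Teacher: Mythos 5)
Your proposal is correct and follows essentially the same route as the paper: both reduce to the computation of $\mathscrOstar\cap\sistar$ in Theorem~\ref{thm:Ccapsi} and then, in the secant case, use the fact that distinct lines of the hyperbolic congruence of $g,g^q$ can only meet on $g$ or $g^q$, which the hypothesis $\bar L\neq\bar P,\bar Q$ rules out. The only cosmetic difference is that the paper treats $\Pstar,\Qptstar$ by the same congruence argument rather than by a separate skewness observation.
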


\begin{proof} If $\mathscr O$ 
is secant to $\li$, so $\mathscr O\cap\li=\{\bar P,\, \bar Q\}$, 
then by Theorem~\ref{thm:Ccapsi}, $\mathscrOstar\cap\sistar$ consists of the four lines $\Pstar,\Qptstar,\gPQ,\gQP$. 
Let $\Lstar$ be an extended spread line, $\bar L\neq \bar P,\, \bar Q$. Then $\Lstar, \Pstar,\Qptstar,\gPQ,\gQP$ are all lines of the hyperbolic congruence of $g,g^q$, and so do not meet off $g,g^q$, and hence are mutually skew. So $\Lstar\cap\mathscrOstar=\emptyset$.
If $\mathscr O$ is tangent to $\li$, then by  Theorem~\ref{thm:Ccapsi}, $\mathscrOstar\cap\sistar=\Pstar$. Hence $\mathscrOstar$ meets no other spread line. 
If $\mathscr O$ is exterior to $\li$, then by Theorem~\ref{thm:Ccapsi}, 
$\mathscrOstar\cap\sistar=\emptyset$, so  
 $\mathscrOstar$ contains no point on any extended spread line, as required.
\end{proof}

\section{Conics of Baer subplanes}\Label{sec:subconics}

In this section we improve Result~\ref{cath-conic} by characterising the normal rational curves of $\PG(4,q)$  that correspond to  conics of a Baer subplane of $\PG(2,q^2)$. In particular, we  show that if $\C$ is a conic contained in a tangent Baer subplane $\B$ of $\PG(2,q^2)$, then in $\PG(4,q)$, the corresponding 3- or 4-dimensional normal rational curve $[\C]$ is $g$-special. Further, we show that any $g$-special  3- or 4-dimensional normal rational curve in $\PG(4,q)$ corresponds to a conic in a Baer subplane of $\PG(2,q^2)$.

\subsection{$\Fqq$-conics and $\Fq$-conics}

In this section we show that the notion of specialness is also intrinsic to the Bruck-Bose representation of conics in Baer subplanes. 
First we introduce some notation to easily distinguish between conics in $\PG(2,q^2)$ and conics contained in a Baer subplane. An {\em $\Fqq$-conic} in $\PG(2,q^2)$ is a non-degenerate conic of $\PG(2,q^2)$. 
 Note that an $\Fqq$-conic meets a Baer subplane $\B$ in either 0, 1, 2, 3 or 4 points, or in a non-degenerate conic of $\B$. 
We define an {\em $\Fq$-conic} of $\PG(2,q^2)$ to be a non-degenerate conic in a Baer subplane of $\PG(2,q^2)$.
For the remainder of this article, $\C$ will denote an $\Fq$-conic. Further,  we denote the {\em unique} $\Fqq$-conic containing $\C$ by {\rm $\Cplus$}. 
An $\Fqq$-conic contains many $\Fq$-conics.

\begin{lemma}\Label{adult-baby}
Let $\mathscr O$ be an $\Fqq$-conic in $\PG(2,q^2)$.  Any three points of $\mathscr O$  lie in a unique $\Fq$-conic that is contained in $\mathscr O$, so there are $q(q^2+1)$ $\Fq$-conics contained in $\mathscr O$. 
\end{lemma}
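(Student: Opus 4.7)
The plan is to parameterise $\mathscr O$ by $\PG(1,q^2)$ and prove that, under this parameterisation, the $\Fq$-conics contained in $\mathscr O$ correspond bijectively with the Baer sublines of $\PG(1,q^2)$. Granted this, any three distinct points of $\mathscr O$ correspond to three distinct points of $\PG(1,q^2)$, which lie in a unique Baer subline, hence in a unique $\Fq$-conic contained in $\mathscr O$; and the number of $\Fq$-conics equals the number of Baer sublines, $\binom{q^2+1}{3}/\binom{q+1}{3}=q(q^2+1)$.

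Applying an element of $\PGL(3,q^2)$, I take $\mathscr O=\{P_t=(1,t,t^2):t\in\Fqq\cup\{\infty\}\}$, giving a bijection $\phi\colon\PG(1,q^2)\to\mathscr O$, $t\mapsto P_t$. The stabiliser of $\mathscr O$ in $\PGL(3,q^2)$ acts on $\mathscr O$ via $\phi$ as the group $\PGL(2,q^2)$ on $\PG(1,q^2)$, and is therefore sharply $3$-transitive on $\mathscr O$. For the easy direction, $\phi(\Fq\cup\{\infty\})$ is the standard $\Fq$-conic in the Baer subplane $\PG(2,q)$; any other Baer subline $b\subset\PG(1,q^2)$ is the image of $\Fq\cup\{\infty\}$ under some $\mu\in\PGL(2,q^2)$, and lifting $\mu$ to a stabilising collineation $\widetilde\mu$ of $\PG(2,q^2)$ carries the standard $\Fq$-conic to $\phi(b)$ in the Baer subplane $\widetilde\mu(\PG(2,q))$, so $\phi(b)$ is an $\Fq$-conic.

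For the converse, let $\C\subseteq\mathscr O$ be an $\Fq$-conic in a Baer subplane $\B$. Take a standard degree-$2$ rational parameterisation $\psi\colon\PG(1,q)\to\C$ inside $\B\cong\PG(2,q)$, and extend the coefficients of $\psi$ from $\Fq$ to $\Fqq$, obtaining $\widetilde\psi\colon\PG(1,q^2)\to\Cplus$, where $\Cplus$ is the Frobenius-invariant $\Fqq$-extension of $\C$ (i.e.\ ``the unique $\Fqq$-conic containing $\C$'' in the paper's convention). For $q\geq 4$, $|\C|=q+1\geq 5$ and five points determine a conic of $\PG(2,q^2)$ uniquely, forcing $\Cplus=\mathscr O$; then $\phi^{-1}\circ\widetilde\psi\in\PGL(2,q^2)$ is a M\"obius transformation, and its restriction to $\PG(1,q)$ equals $\phi^{-1}(\C)$, which is the M\"obius image of the Baer subline $\PG(1,q)$ and therefore itself a Baer subline. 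The main obstacle is the small cases $q\in\{2,3\}$, where several $\Fqq$-conics can contain $\C$ and the five-point argument fails; the hypothesis ``$\C$ contained in $\mathscr O$'' must then be read as $\Cplus=\mathscr O$, under which the M\"obius step still carries through.
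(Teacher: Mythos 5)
Your proof is correct and follows essentially the same route as the paper's: identify $\mathscr O$ with $\PG(1,q^2)$ so that the subconics contained in $\mathscr O$ become exactly the Baer sublines, deduce uniqueness through three points, and count. The only difference is one of detail --- the paper simply asserts the equivalence between subconics of $\mathscr O$ and Baer sublines and uses a homography onto the standard conic to see that each subconic is an $\Fq$-conic, whereas you prove both directions of the correspondence explicitly (and flag the small-$q$ caveat, which the paper passes over).
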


\begin{proof}
The $\Fqq$-conic  $\mathscr O$ is   equivalent to the line $\ell\cong\PG(1,q^2)$,  and subconics of $\mathscr O$ are   equivalent  to Baer sublines of $\ell$. Since three points of $\ell$ lie in a unique Baer subline of $\ell$, three points of $\mathscr O$ lie in a unique subconic $\C$.
As $\C$ is a normal rational curve over ${{\mathbb F}_q}$, 
 by \cite[Theorem 21.1.1]{hirs91} there is a homography $\phi$ that maps $\C$ to $\C'=\phi(\C)= \{(1,\theta,\theta^2)\st\theta\in{{\mathbb F}_q}\cup\{\infty\}\}$. As $\C'$ lies in the Baer subplane $\B'=\PG(2,q)$, $\C$ lies in the Baer subplane $\phi^{-1}(\B')$, that is, $\C$ is an $\Fq$-conic.  Straightforward counting shows that the number of $\Fq$-conics in $\mathscr O$ is $(q^2+1)q^2(q^2-1)/(q+1)q(q-1)=q(q^2+1)$.
\end{proof}

\begin{remark}\Label{remark-subconic-conic}
{\rm Let $\C$ be an $\Fq$-conic in $\PG(2,q^2)$, $q>4$, so there is a unique $\Fqq$-conic {\rm $\Cplus$}  with $\C\subset \Cplus$.  Then in $\PG(4,q)$, $[\C]\subset[\Cplus]$. This is clearly true for the affine points. For the points at infinity, we recall Remark~\ref{subconic-exact}, if $\bar T\in\C\cap\li\subseteq\Cplus\cap\li$, then $[\C]$ meets the spread line $[T]$ in a point, while $[\Cplus]$ contains  the spread line $[T]$. 
}\end{remark}

\subsection{Conics in secant Baer subplanes}\Label{sec:secant-Baer-conic}

In this section we consider 
the Bruck-Bose representation of $\Fq$-conics in secant Baer subplanes of $\PG(2,q^2)$, in particular looking at the relationship with the lines of the hyperbolic congruence of $g,g^q$.

\begin{theorem}\Label{lem:sect-conic}
Let $\C$ be an $\Fq$-conic in a Baer subplane $\B$ secant to $\li$. The $\Fqq$-conic {\rm {\rm $\Cplus$}} meets $\li$ in two points $\bar P,\, \bar Q$ (possibly equal). In $\PG(4,q)$, $[\C]$ is a non-degenerate conic in the plane $[\B]$, and {\rm $[\Cplus]\cap\si$}  is the two spread lines $[P]$, $[Q]$.
\begin{enumerate}
\item If $\bar P= \bar Q$, then $\bar P\in\B$, and $[\C]$ meets $\si$ in one point  $[P]\cap[\B]$.
\item If $\bar P\neq \bar Q$ and 
 $\bar P,\, \bar Q\in\B$, then $[\C]$ meets $\si$ in two points 
$[P]\cap[\B]$ and $[Q]\cap[\B]$.
\item  If $\bar P\neq \bar Q$ and    $\bar P,\, \bar Q\notin\B$, then  $[\C]$ is a $(\gPQ)$-special conic.
\end{enumerate}
\end{theorem}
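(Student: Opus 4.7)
The plan is to use the Baer involution $\sigma_\B$ of $\B$ to constrain the set $\{\bar P, \bar Q\} = \Cplus \cap \li$, and then translate each case into the Bruck-Bose picture. I would first record the general facts: $[\C]$ is a non-degenerate conic in the plane $[\B]$ by Result~\ref{cath-conic}(1); $\Cplus \cap \B = \C$, since an $\Fqq$-conic meets a Baer subplane in at most four points or in a full subconic, and $\C$ is already a subconic of size $q+1$; and $[\Cplus] \cap \si = [P] \cup [Q]$ (with $[P]=[Q]$ when $\bar P = \bar Q$) by Theorem~\ref{thm:Ccapsi} parts~1 and 2. Next, I would observe that $\sigma_\B$ fixes $\Cplus$ setwise (it fixes the $q+1$ points of $\C$, which determine $\Cplus$) and fixes $\li$ setwise (it fixes the $q+1$ points of $\B \cap \li$), so $\sigma_\B$ permutes $\{\bar P, \bar Q\}$. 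This permutation rules out the mixed situation where exactly one of $\bar P, \bar Q$ lies in $\B$: if $\bar P \in \B$ were fixed and $\bar Q \notin \B$ were moved, then $\sigma_\B(\bar Q) = \bar P$ would give $\bar Q = \sigma_\B^2(\bar Q) = \bar P$, a contradiction.

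For Case 1, $\sigma_\B(\bar P) = \bar P$ forces $\bar P \in \B$, hence $\bar P \in \Cplus \cap \B = \C$; for Case 2, $\bar P, \bar Q \in \B$ gives $\bar P, \bar Q \in \C$ directly. In both cases, Remark~\ref{remark-subconic-conic} tells us that $[\C]$ meets each relevant spread line $[T]$ (for $\bar T \in \C \cap \li$) in a single point, which must be $[T] \cap [\B]$ since $[\C] \subset [\B]$. This yields $[\C] \cap \si = [P] \cap [\B]$ in Case 1 and $\{[P] \cap [\B], [Q] \cap [\B]\}$ in Case 2.

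For Case 3, $\sigma_\B$-invariance with neither point in $\B$ forces $\sigma_\B$ to swap $\bar P, \bar Q$, so $\bar P, \bar Q$ are conjugate with respect to $\B$. Corollary~\ref{cor:Baerplane-trans} then provides a Galois-conjugate pair of points $X = \gPQ \cap \Bstar$ and $X^q = \gQP \cap \Bstar$ in $\PG(4, q^2)$. On the other hand, $\bar P, \bar Q \notin \B$ forces $\C \cap \li = \emptyset$, so the $\Fq$-conic $[\C]$ is disjoint from $\ell = [\B] \cap \si$ over $\Fq$; extending to $\Fqq$, the conic $[\C]\star$ meets $\ell\star = \Bstar \cap \sistar$ in a Galois-conjugate pair of $\Fqq$-points. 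To identify this pair as $\{X, X^q\}$, I would combine $[\C]\star \subseteq [\Cplus]\star$ with the decomposition $[\Cplus]\star \cap \sistar = \Pstar \cup \Qptstar \cup \gPQ \cup \gQP$ from Theorem~\ref{thm:Ccapsi}(1b), together with the observation that $\Pstar, \Qptstar$ are skew to $\ell\star$: since $\bar P, \bar Q \notin \B$, the $\Fq$-lines $[P], [Q]$ are skew to $\ell$ in $\si$, and $\Fq$-skewness persists under field extension. The only intersections of the four-line union with $\ell\star$ are then $\{X, X^q\}$, which must therefore coincide with $[\C]\star \cap \ell\star$, proving that $[\C]$ is $(\gPQ)$-special.

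The main obstacle I expect is precisely this final identification in Case 3 — showing that the two Galois-conjugate points at which $[\C]\star$ meets $\ell\star$ are the congruence-line intersections $X$ and $X^q$ rather than lying on the extended spread lines $\Pstar, \Qptstar$. It rests on the interplay of three facts: $[\C]$ being disjoint from $\ell$ over $\Fq$, the four-line decomposition of $[\Cplus]\star \cap \sistar$, and the persistence of $\Fq$-skewness through the extension to $\Fqq$.
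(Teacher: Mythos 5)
Your proposal is correct and follows essentially the same route as the paper: the heart of the argument (part 3) is the identification $\Cstar\cap\sistar=\{\Bstar\cap\sistar\}\cap\{\Cplusstar\cap\sistar\}$, the four-line decomposition of $\Cplusstar\cap\sistar$ from Theorem~\ref{thm:Ccapsi}, and the exclusion of the extended spread lines, exactly as in the paper. Your Baer-involution argument for parts 1 and 2 and the appeal to Corollary~\ref{cor:Baerplane-trans} are sound extra detail for steps the paper treats as immediate.
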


\begin{proof}
By Results~\ref{BB-Baer} and~\ref{cath-conic}, in $\PG(4,q)$, 
$[\B]$ is a plane, and $[\C]$ is a conic in $[\B]$. Parts 1 and 2 follow immediately from the Bruck-Bose definition. For part 3, the set $[\Cplus]$ contains the spread lines $[P]$ and $[Q]$. The set 
$[\B]$ is a plane, and the line $m=[\B]\cap\si$ meets $q+1$ spread lines, but does not meet the spread lines $[P]$, $[Q]$. 
The set  $[\C]$ is a non-degenerate conic in 
 $[\B]$ which does not meet $m$, and in the extension to $\PG(4,q^2)$, $\Cstar$  meets $\sistar$ in two points  of the line $\mstar=\Bstar\cap\sistar$.  
In $\PG(2,q^2)$, we have $\C=\B\cap\Cplus$, and in $\PG(4,q)$,   $[\C]=[\B]\cap[\Cplus]$. Moreover, in $\PG(4,q^2)$, $\Cstar=\Bstar\cap\Cplusstar$, 
hence $\Cstar\cap\sistar=\{\Bstar\cap\sistar\}\cap\{\Cplusstar\cap\sistar\}$. By Theorem~\ref{thm:Ccapsi}, this  is equal to $\{\mstar\}\cap\{g,g^q,\gPQ,\gQP\}$. Now $\mstar$ does not meet $g$ (or $g^q$) as the only lines of $\si$ whose extension meets $g$ are the lines of $\S$. Hence the two points of $\Cstar\cap\sistar$ lie in $\gPQ$ and $\gQP$, that is, $[\C]$ is a $(\gPQ)$-special conic of $\PG(4,q)$. 
\end{proof}

We now characterise $\Fq$-conics in secant Baer subplanes by showing that the converse is true. 

\begin{theorem} In $\PG(4,q)$, let $\alpha$ be a plane not containing a spread line, and let $\N$ be a non-degenerate conic in $\alpha$. 
\begin{enumerate}
\item In $\PG(2,q^2)$, there is a secant Baer subplane $\B$ with $[\B]=\alpha$, and an $\Fq$-conic $\C$ in $\B$ with $[\C]=\N$.
\item If $\N$ meets $\si$ in a point of the spread line $[T]$, then $\bar T\in\C$.
\item If $\N$ is a $(\gPQ)$-special conic, then the $\Fqq$-conic  {\rm $\Cplus$} containing $\C$  meets $\li$ in the points $\bar P,\, \bar Q$.
\end{enumerate}
\end{theorem}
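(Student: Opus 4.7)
The plan is to invoke Result~\ref{BB-Baer}(3) to get a secant Baer subplane $\B$ with $[\B]=\alpha$, and then transfer $\N$ across the Bruck-Bose correspondence. That correspondence is a projective-plane isomorphism from $\B$ to $\alpha$: affine points map to affine points under $\varphi$, and each point $\bar T\in\B\cap\li$ corresponds to the point $m\cap[T]$, where $m=\alpha\cap\si$. Under it, the conic $\N$ in $\alpha$ pulls back to a non-degenerate conic $\C$ in $\B$, i.e.\ an $\Fq$-conic, proving part~1. Part~2 is immediate: any $X\in \N\cap\si$ lies in $m\cap[T]$, so $[T]$ meets $\alpha$, hence $\bar T\in\B\cap\li$, and $X\mapsto \bar T$ under the bijection gives $\bar T\in\C$.

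For part~3, suppose $\N$ is $(PQ^q)$-special, so $\N\star\cap\sistar$ consists of two conjugate points $Y\in PQ^q$ and $Y^q\in P^qQ$; these cannot be $\Fq$-rational since the lines $PQ^q,P^qQ$ are non-$\Fq$-rational lines of the hyperbolic congruence of $g,g^q$ and therefore contain no $\Fq$-point. Hence $\N\cap\si=\emptyset$, so $\C\cap\li=\emptyset$ and thus $\Cplus\cap\li\cap\B=\emptyset$. I would then rule out $\Cplus$ being exterior or tangent to $\li$. If $\Cplus$ is exterior, Theorem~\ref{thm:Ccapsi}(3) yields $\Cplusstar\cap\sistar=\emptyset$, but $\N\star\subset\Cplusstar$ contains two points of $\sistar$, a contradiction. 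If $\Cplus$ is tangent at some $\bar P_0$, Theorem~\ref{thm:Ccapsi}(2) gives $\Cplusstar\cap\sistar=[P_0]\star$, and since $\alpha$ contains no spread line we have $m\star\neq [P_0]\star$, forcing $|\N\star\cap\sistar|\leq |m\star\cap[P_0]\star|\leq 1$, a contradiction. Therefore $\Cplus$ is secant to $\li$ at two distinct points $\bar P',\bar Q'\notin\B$, and Theorem~\ref{lem:sect-conic}(3) gives that $\N$ is also $(P'Q'^q)$-special.

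The main step, which I expect to be the principal obstacle, is to conclude $\{\bar P,\bar Q\}=\{\bar P',\bar Q'\}$ from this double specialness. The key geometric fact is that $m\star$ is skew to both $g$ and $g^q$: if $m\star$ met $g$ then $m\star$ would be a line of the hyperbolic congruence, but the $\Fq$-rational lines of that congruence are precisely the extended spread lines $[L]\star$, whereas $m\star$ extends the $\Fq$-line $m$ which is not a spread line. Consequently $Y,Y^q\notin g\cup g^q$. Now $Y$ lies on $PQ^q$ and on one of $P'Q'^q,P'^qQ'$, and any two distinct lines of the hyperbolic congruence meet only on $g\cup g^q$, so the line $PQ^q$ coincides with one of $P'Q'^q,P'^qQ'$. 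Matching the intersection points with $g$ and $g^q$ in each case forces $\{P,Q\}=\{P',Q'\}$, hence $\Cplus\cap\li=\{\bar P,\bar Q\}$.
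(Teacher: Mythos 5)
Your proof is correct and takes essentially the same route as the paper: parts 1--2 via Result~\ref{BB-Baer}, and for part 3 the same chain ($\N\cap\si=\emptyset$, hence $\C$ misses $\li$, hence $\Cplus$ is secant to $\li$ at two points off $\B$, hence Theorem~\ref{lem:sect-conic}(3) applies, and the two specialness pairs are matched using the fact that distinct lines of the hyperbolic congruence meet only on $g\cup g^q$). The only divergence is minor: the paper reads off ``$\Cplus$ secant to $\li$ off $\B$'' directly from $\C$ lying in a secant Baer subplane, whereas your exclusion of the tangent and exterior cases goes through Theorem~\ref{thm:Ccapsi} and tacitly uses $\Nstar\subseteq\Cplusstar$ (the conic analogue of Lemma~\ref{lem:nrc-extn}, so a mild lower bound on $q$ not present in the statement), while your final matching step is actually more detailed than the paper's one-line assertion.
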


\begin{proof}
 Parts 1 and 2  follow from Result~\ref{BB-Baer}.
For part 3, in $\PG(4,q)$, let $\N$ be a $(\gPQ)$-special conic of $\PG(4,q)$ lying in a plane  $\alpha$ that does not contain a spread line. By part 1, $[\B]=\alpha$ and $[\C]=\N$ where in $\PG(2,q^2)$, $\B$ is a secant Baer subplane containing the $\Fq$-conic $\C$.
 As $\N$ is a $(\gPQ)$-special conic, $\N\cap\si=\emptyset$, and in $\PG(4,q^2)$, $\Nstar$ is a conic which meets the line $\alpha\cap\sistar$ is two points,  one lying on each of $\gPQ$ and $\gQP$.  As $\N\cap\si=\emptyset$, in $\PG(2,q^2)$, the $\Fq$-conic  $\C$ does not meet $\li$, so the $\Fqq$-conic $\Cplus$ meets $\li$ in two points $\bar A,\bar B\notin\B$. By Theorem~\ref{lem:sect-conic}, $[\C]=\N$ is a $(\gAB)$-special conic. Hence $\{\bar A,\bar B\}=\{\bar P,\, \bar Q\}$, so $\Cplus\cap\li=\{\bar P,\, \bar Q\}$ as required. 
\end{proof}

\subsection{Conics in tangent Baer subplanes}

We now consider a Baer subplane $\B$ that is tangent to $\li$ and  look at $\Fq$-conics in $\B$. There are  two cases to consider, namely whether the $\Fq$-conic  contains the point $\B\cap\li$ or not. In each case we generalise Result~\ref{cath-conic} by showing that the corresponding normal rational curve of $\PG(4,q)$  is $g$-special. Further, we characterise all $g$-special  normal rational curves in $\PG(4,q)$  as corresponding to $\Fq$-conics in a tangent Baer subplane.

\subsubsection{Conics in $\B$ containing the point $\bar T=\B\cap\li$}

We first look at an $\Fq$-conic $\C$ in a tangent Baer subplane $\B$, with $\B\cap\li$ in $\C$. 

%

\begin{theorem}\Label{thm-tgt-conic-T-1} 
In $\PG(2,q^2)$, $q>5$, let $\B$ be a tangent Baer subplane and $\C$ an $\Fq$-conic in $\B$ containing the point $\B\cap\li$. Then in $\PG(4,q)$, $[\C]$ is  a $g$-special twisted cubic.
\end{theorem}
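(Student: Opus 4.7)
The plan is to exhibit $\Cstar$ as a twisted cubic on the extended ruled cubic surface $\Bstar$ in $\PG(4,q^2)$, and to recognise both transversals $g$ and $g^q$ as generator lines of $\Bstar$; the conclusion then follows at once from the fact that a twisted cubic cut out by a hyperplane meets every generator in a single point.

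First, I would assemble the ambient geometry. By Result~\ref{cath-conic}(2), $[\C]$ is a twisted cubic lying on the ruled cubic surface $[\B]$, and by Result~\ref{BB-Baer}(5), $[\B]$ is a $g$-special ruled cubic surface, so $\Bstar$ contains both $g$ and $g^q$ as lines. Moreover, by Result~\ref{3-space-meets-ruled}, the twisted cubic $[\C]$ arises as a hyperplane section of $[\B]$, that is, $[\C] = \Pi\cap[\B]$ for some hyperplane $\Pi$ of $\PG(4,q)$. Extending to $\PG(4,q^2)$ we obtain $\Cstar = \Pistar\cap\Bstar$, which is again a twisted cubic, now cut out on $\Bstar$ viewed as a ruled cubic surface over $\Fqq$ with line directrix $\Tstar$.

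Next, I would argue that $g$ (and by conjugation $g^q$) is one of the generator lines of $\Bstar$. A non-degenerate ruled cubic surface in $\PG(4)$ contains, as its complete set of lines, the unique line directrix together with the one-parameter family of generators. Since $g\subset\Bstar$, either $g=\Tstar$ or $g$ is a generator. But $g\neq\Tstar$: indeed $g$ meets the extended spread line $\Tstar$ in only the single point $\Tbar$ (the point of $g$ corresponding to $\bar T\in\li$ under the coordinatisation of Section~\ref{sec:background-coord}), so $g$ cannot coincide with the line $\Tstar$. Hence $g$ is a generator of $\Bstar$, and the same holds for $g^q$.

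Finally, Corollary~\ref{lem:tc-brs} is a purely geometric statement about hyperplane sections of ruled cubic surfaces, valid over any field, and so in particular over $\Fqq$. Applied to $\Cstar=\Pistar\cap\Bstar$ on the surface $\Bstar$, it yields that $\Cstar$ meets each generator of $\Bstar$ in exactly one point; specialising to the generators $g$ and $g^q$ gives $|\Cstar\cap g|=|\Cstar\cap g^q|=1$. By the definition in Section~\ref{sec:def-special}, this is exactly the assertion that $[\C]$ is a $g$-special twisted cubic.

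The principal technical obstacle is the identification of $g$ as a generator of $\Bstar$, rather than some more exotic line. This rests on the standard classification of lines on a non-degenerate rational normal scroll $S(1,2)$ in $\PG(4)$ together with the observation that the directrix of the surface is intrinsically determined and preserved under base change, so the directrix of $\Bstar$ is canonically $\Tstar$. Should a more self-contained verification be preferred, one can proceed by explicit coordinates, parametrising $[\B]$ using the Bruck-Bose map from Section~\ref{sec:background-coord} and checking directly that $g$ appears as the generator of $\Bstar$ indexed by a parameter in $\Fqq\setminus\Fq$.
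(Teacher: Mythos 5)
Your overall strategy coincides with the paper's: both proofs reduce the statement to the facts that $g$ and $g^q$ are generator lines of the extended ruled cubic surface $\Bstar$, and that a twisted cubic cut out on a ruled cubic surface by a hyperplane meets every generator exactly once (Corollary~\ref{lem:tc-brs}). Your explicit check that $g$ is a generator rather than the line directrix is slightly more detailed than the paper's, and is fine.

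There is, however, a genuine gap at the step ``Extending to $\PG(4,q^2)$ we obtain $\Cstar=\Pistar\cap\Bstar$.'' Containments of varieties over $\Fq$ do not automatically persist under field extension: from $[\C]=\Pi\cap[\B]$ you may conclude $\Cstar\subseteq\Pistar$, but the containment $\Cstar\subseteq\Bstar$ is precisely the nontrivial point of the whole argument, and it is exactly where the hypothesis $q>5$ must enter --- your proof never invokes that hypothesis, which is the tell-tale sign that something has been assumed rather than proved. The paper handles this by writing $[\B]$ as the intersection of three quadrics and applying the analogue of Lemma~\ref{lem:nrc-extn} for $3$-dimensional normal rational curves: a polynomial of degree at most $6$ vanishing at all $q+1$ parameter values of the curve is identically zero once $q>5$. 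The example following Lemma~\ref{lem:nrc-extn} shows such containments genuinely fail for small $q$, so this step cannot be waved through. Your fallback suggestion of an explicit coordinate computation could also close the gap, but as written the proposal asserts rather than establishes the one claim that carries the weight of the theorem. (Once $\Cstar\subseteq\Bstar$ is in hand, one should also observe that $\Pistar\cap\Bstar$ must then fall into the twisted-cubic case of Result~\ref{3-space-meets-ruled}, so that Corollary~\ref{lem:tc-brs} is applicable over $\Fqq$; this last point is minor.)
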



\begin{proof} Let $\B$ be a Baer subplane of $\PG(2,q^2)$ that is tangent to $\li$ in the point $\bar T=\B\cap\li$. Let $\C$ be an $\Fq$-conic of $\B$ that contains $\bar T$. By Result~\ref{cath-conic}, in $\PG(4,q)$, $\C$ corresponds to a twisted cubic $[\C]$ that lies in a $3$-space denoted $\PiC$. By Result~\ref{3-space-meets-ruled}, $\PiC$ meets the ruled cubic surface  $[\B]$ in exactly the twisted cubic $[\C]$. 
We   show that $[\C]$ is  $g$-special. 
By Lemma~\ref{lemma-3-Baer}, the $3$-space $\PiC$ corresponds to an $\li$-Baer pencil $\K$ of $\PG(2,q^2)$ that meets $\B$ in $\C$. By Theorem~\ref{cor:tgt-baby}, $\K$ has vertex $\bar P\in\Cplus$. Hence $\PiC$ contains the spread line $[P]$ (and this is the only spread line in $\PiC$). 
Consider the extension of $\PG(4,q)$ to $\PG(4,q^2)$.
Note that Lemma~\ref{lem:nrc-extn} can be generalised to a 3-dimensional normal rational curve when $q>5$. Hence as $[\B]$ is the intersection of three quadrics \cite{FFA},  we have $\Cstar\subset\Bstar$ in $\PG(4,q^2)$. Thus by Corollary~\ref{lem:tc-brs}, the twisted cubic $\Cstar$ contains a unique point of each generator line of the ruled cubic surface $\Bstar$. By Result~\ref{BB-Baer}, $[\B]$ is $g$-special, so the transversal lines $g,g^q$ of the regular spread $\S$ are generator lines of the extended ruled cubic surface $\Bstar$. Hence $\Cstar$ contains a point of $g$ and $g^q$. Thus $\Cstar$ contains the points corresponding to the vertex of $\K$, that is, the point $g\cap \PiCstar=g\cap\Pstar=P$ and $P^q$.  
That is, the twisted cubic $[\C]$ is $g$-special. 
\end{proof}

The converse of Theorem~\ref{thm-tgt-conic-T-1} is also true.

\begin{theorem}\Label{conv-tgt}
A $g$-special twisted cubic in $\PG(4,q)$ corresponds to an $\Fq$-conic in some tangent Baer subplane of $\PG(2,q^2)$. 
\end{theorem}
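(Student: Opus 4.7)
The plan is to associate to $\N$ two points $\bar T,\bar P\in\li$, construct a $g$-special ruled cubic surface $\V$ in $\PG(4,q)$ with line directrix $[T]$ that contains $\N$, invoke Result~\ref{BB-Baer}(5) to recognise $\V=[\B]$ for a Baer subplane $\B$ tangent to $\li$ at $\bar T$, and finally use Theorem~\ref{thm:partition-intro} to identify $\N$ as $[\C]$ for an $\Fq$-conic $\C\subset\B$ through $\bar T$. To extract the two points, first observe that $\N$ spans a unique $3$-space $\Pi$, which by Lemma~\ref{lemma-3-Baer} contains a unique spread line $[P]$, corresponding to some $\bar P\in\li$; a dimension count in $\sistar$ shows $\Pistar\cap g=\{P\}$, and since $\Nstar\subset\Pistar$ meets $g$ in a single point by $g$-specialness, that point must be $P$, with Frobenius conjugate $P^q\in\Nstar\cap g^q$. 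The cubic curve $\Nstar$ meets the plane $\Pistar\cap\sistar$ in three points counted with multiplicity; two of these are $P$ and $P^q$ lying on $\Pstar$, and the third point $X$ is Frobenius-fixed, hence lies in $\PG(4,q)$. Since a line meets a twisted cubic in at most two points, $X\notin[P]$, and so $X$ lies on another spread line $[T]$ with $\bar T\neq\bar P$.

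Next I construct the $g$-special ruled cubic surface $\V$. Working in $\PG(4,q^2)$, I specify $\Vstar$ by giving its line directrix $\Tstar$ together with the requirement that $g$ be the generator at $T=\Tstar\cap g$ and $g^q$ be the generator at $T^q=\Tstar\cap g^q$. A ruled cubic surface is determined by its line directrix together with the projectivity $\omega$ to the conic directrix (equivalently, by three of its generators), so these two requirements fix $\omega$ at two points and leave a one-parameter family of candidate surfaces. I would argue that the condition $\Nstar\subset\Vstar$ then uniquely pins down the remaining parameter: for each remaining real point $Y\in[T]\setminus\{X\}$ the generator at $Y$ must pass through one of the $q-1$ remaining real points of $\N$, and the generator at $X$ must pass through $X$ alone. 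These incidence conditions are mutually consistent by rigidity, since both $\N$ and the one-parameter family of candidate ruled cubics are $\Fq$-rationally determined by the same projective data supported on $P,P^q,X$ inside $\Pistar\cap\sistar$, and $g$-specialness of $\N$ is precisely the compatibility needed to match them. The resulting $\Vstar$ is Frobenius-stable by construction, so descends to a ruled cubic surface $\V$ in $\PG(4,q)$ with line directrix $[T]$ and $g,g^q\subset\Vstar$; that is, $\V$ is $g$-special.

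Finally, by Result~\ref{BB-Baer}(5), $\V=[\B]$ for a Baer subplane $\B$ tangent to $\li$ at $\bar T$; since the twisted cubic $\N\subset[\B]$ lies in the hyperplane $\Pi$, Result~\ref{3-space-meets-ruled} forces $\N=[\B]\cap\Pi$. By Theorem~\ref{thm:partition-intro}, the $\li$-Baer pencil with vertex $\bar P$ corresponding to $\Pi$ meets $\B$ either in a non-degenerate conic through $\bar T$ or in two lines; the two-line case would produce a conic-plus-generator hyperplane section of $[\B]$ rather than a twisted cubic, so $\Pi\cap\B$ must be an $\Fq$-conic $\C$ through $\bar T$ with $[\C]=\N$, as required. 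The main obstacle is the middle step: showing that among the one-parameter family of ruled cubic surfaces with line directrix $\Tstar$ having $g,g^q$ as the prescribed generators, exactly one contains $\Nstar$. I expect this to reduce to an explicit coordinate calculation, or equivalently to a rigidity statement for rational normal curves in this Frobenius-symmetric setting, verifying that the $q-1$ incidence conditions on the remaining parameter are simultaneously satisfied precisely because of the $g$-special hypothesis.
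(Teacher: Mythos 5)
Your opening paragraph agrees with the paper: both arguments extract the unique spread line $[P]$ (the paper's $[R]$) in the $3$-space $\PiN=\langle\N\rangle$, note that $\Nstar$ meets $g,g^q$ in the points $P,P^q$ of $\Pstar$, and locate the third, $\Fq$-rational, point $X$ of $\N\cap\si$ on a second spread line $[T]$. After that the routes diverge, and your middle step is a genuine gap, not just a deferred computation. You propose to build the $g$-special ruled cubic surface synthetically from the data ``line directrix $\Tstar$, generators $g$ and $g^q$'', and claim this leaves a one-parameter family that the condition $\Nstar\subset\Vstar$ pins down. But a ruled cubic surface is determined by its line directrix, its conic directrix \emph{together with the plane carrying it}, and the projectivity $\omega$; prescribing $\Tstar$ and two generators leaves the directrix plane, the conic through the two marked points, and the remaining freedom in $\omega$ all undetermined, so the family is far larger than one-dimensional. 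More importantly, the existence of \emph{any} ruled cubic surface containing $\N$ with $g,g^q$ among its generators is precisely the content of the theorem (via Result~\ref{BB-Baer}(5)), and your appeal to ``mutual consistency by rigidity'' assumes it rather than proving it. The final step (using Lemma~\ref{lemma-3-Baer} and Theorem~\ref{thm:partition-intro} to rule out the conic-plus-generator section) is fine once the surface is in hand, but the surface is never actually produced.

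The paper avoids this by constructing the Baer subplane upstairs rather than the scroll downstairs. Take three affine points $[A],[B],[C]$ of $\N$ and let $\bar T$ correspond to the spread line through $X$. One checks that $\alpha=\langle[A],[B],[C]\rangle$ contains neither a spread line nor the point $X$, so $\alpha$ corresponds to a secant Baer subplane $\B_\alpha$ with $\bar T\notin\B_\alpha$; hence $\{\bar T,A,B,C\}$ is a quadrangle lying in a unique Baer subplane $\B$, which must differ from $\B_\alpha$ and is therefore tangent to $\li$ at $\bar T$. Now $[\B]$ is a $g$-special ruled cubic surface with directrix $[T]$ obtained for free from Result~\ref{BB-Baer}(5), and $\PiN\cap[\B]$ is a twisted cubic $\N_1$ whose extension also passes through $P,P^q$ (by the argument of Theorem~\ref{thm-tgt-conic-T-1}); since $\Nstar$ and $\Nonestar$ share the six points $X,[A],[B],[C],P,P^q$, they coincide. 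A counting argument (matching the $q^4-q^2$ twisted-cubic hyperplane sections of $[\B]$ with the $q^4-q^2$ $\Fq$-conics of $\B$ through $\bar T$) then identifies $\N$ as $[\C]$. If you want to salvage your outline, replace your middle step with this quadrangle construction; your identification of $P$, $P^q$ and $X$, and your concluding use of Theorem~\ref{thm:partition-intro} in place of the counting, can be kept.
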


\begin{proof} 
Let $\N$ be a $g$-special twisted cubic in $\PG(4,q)$, so in $\PG(4,q^2)$,  $\Nstar$ meets the transversal $g$ of $\S$ in a point $R$, and meets $g^q$ in the point $R^q$.  The line $RR^q$ meets $\si$ in a spread line denoted $[R]$, corresponding to the point  $\bar R\in\li$. 
Let $\PiN$ be the 3-space containing $\N$, and recall that a twisted cubic meets a plane in three points, possibly repeated, or in an extension.  As   $\N$ meets the plane $\pi=\PiN\cap\si$  in two points, $R,R^q$ over ${{\mathbb F}_{q^2}}$,  $\N$ meets $\pi$ in one point $X$ over ${{\mathbb F}_q}$. 
Let $[T]$ be the spread line containing the point $X$,  so $[T]\notin\PiN$. Let $[A],[B],[C]$ be three affine points of $\N$, and let $\alpha=\langle [A],[B],[C]\rangle$.

As $\alpha$ lies in the 3-space $\PiN$, if $\alpha$ contained a spread line, it would contain $[R]$. However, if $\alpha$ contains $[R]$, then the plane $\langle [A],[B],[R]\rangle\star$ would contain four points of $\Nstar$, namely $[A],[B],R, R^q$, a contradiction. 
If $\alpha$  contained the point $X$, then $\alpha$ would contain four points of  $\N$, namely $X,[A],[B],[C]$, a contradiction. 
Hence $\alpha$ corresponds to a Baer subplane $\B_\alpha$ of $\PG(2,q^2)$ that is secant to $\li$, with $\bar T\notin \B_\alpha$.
 Hence  the  points $\{\bar T,A,B,C\}$ form a quadrangle  and so lie in a unique Baer subplane denoted $\B$.
As $\B_\alpha$ is the unique Baer subplane containing $A,B,C$ and secant to $\li$, we have $\B\neq\B_\alpha$, and $\B$ is tangent to $\li$ at the point $\bar T$. 

In $\PG(4,q)$, $[\B]$ is a ruled cubic surface with line directrix $[T]$. As $X,[A],[B],[C]$ are points of $\N$, no three are collinear, so  $[A],[B],[C]$ lie on distinct generators of $[\B]$. Recall that $\PiN$ does not contain $[T]$, so by Result~\ref{3-space-meets-ruled}, $\PiN$ meets $[\B]$ in a twisted cubic, denoted $\N_1$. The argument in the proof of Theorem~\ref{thm-tgt-conic-T-1} shows that in the quadratic extension, $\Nonestar$ contains the points 
$R$ and $R^q$. Hence $\Nstar$ and $\Nonestar$ share six points, and so are equal. That is, $\N$ is a $g$-special twisted cubic contained in a $g$-special ruled cubic surface, and $\N$ meets $\si$ in one point.

Straightforward counting shows that in $\PG(2,q^2)$, the number of $\Fq$-conics in $\B$ that contain $\bar T$ is $q^4-q^2$. By  Result~\ref{3-space-meets-ruled}, the number of $3$-spaces of $\PG(4,q)$ that meet the ruled cubic surface $[\B]$ in a twisted cubic is $q^4-q^2$. Hence they are in one to one correspondence. That is,
$\N$ corresponds to an $\Fq$-conic in the tangent Baer subplane $\B$ as required. 
 \end{proof}

Further, the proofs of Theorems~\ref{thm-tgt-conic-T-1} and~\ref{conv-tgt}  show that  the points of $g$ on a $g$-special twisted cubic correspond to the points on $\li$ contained in the  corresponding $\Fqq$-conic. 

\begin{corollary}\Label{thm-tgt-conic-T-2} 
Let $\C$ be an $\Fq$-conic in 
a tangent Baer subplane $\B$ in $\PG(2,q^2)$, $q>5$, with $\bar T=\B\cap\li\in\C$. The $\Fqq$-conic {\rm $\Cplus$}  meets $\li$ in a point $\bar P\neq \bar T$ if and only if in $\PG(4,q^2)$, the twisted cubic $\Cstar$ meets the transversals of $\S$ in  the points $P$, $P^q$. 
\end{corollary}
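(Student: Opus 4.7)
The plan is that Corollary~\ref{thm-tgt-conic-T-2} is essentially a repackaging of specific identifications made in the proofs of Theorems~\ref{cor:tgt-baby} and~\ref{thm-tgt-conic-T-1}: both sides of the biconditional pick out a unique point, and the claim is that these two points correspond under the 1-1 identification $\bar X \leftrightarrow X$ of Section~\ref{sec:background-coord}.

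First I would establish uniqueness of the referenced points on each side. On the $\li$ side, since $\B \cap \li = \{\bar T\}$, the line $\li$ is not a line of $\B$, so $\li$ is not the tangent to $\C$ at $\bar T$ inside $\B$. Because the tangent to $\Cplus$ at $\bar T$ in $\PG(2,q^2)$ is the $\PG(2,q^2)$-extension of the tangent to $\C$ in $\B$, the line $\li$ cannot be tangent to $\Cplus$, forcing $\Cplus \cap \li = \{\bar T, \bar L\}$ with $\bar L \neq \bar T$ unique. On the $g$ side, Theorem~\ref{thm-tgt-conic-T-1} says $[\C]$ is $g$-special, so by definition $\Cstar \cap g = \{R\}$ and $\Cstar \cap g^q = \{R^q\}$ for a unique $R$.

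The key step is to verify $L = R$ as points of $g$. By Theorem~\ref{cor:tgt-baby}, $\bar L$ is the vertex of the unique $\li$-Baer pencil $\K$ containing $\C$. By Lemma~\ref{lemma-3-Baer}, the $3$-space $\PiC$ containing $[\C]$ corresponds to $\K$, so $[L] \subset \PiC$, hence $\Lstar \subset \PiCstar$. Since $\PiC \neq \si$, one has $g \not\subset \PiCstar$: otherwise $\PiCstar$, being defined over $\Fq$, would also contain $g^q$, and hence the span $\langle g, g^q \rangle = \sistar$, forcing $\PiC = \si$. Therefore $\PiCstar \cap g$ is a single point, and since $\Lstar \subset \PiCstar$ this point must be $\Lstar \cap g = L$. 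Combined with $\Cstar \subset \PiCstar$ and the $g$-specialness of $[\C]$, this gives $\Cstar \cap g = \{L\}$, so $R = L$.

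Both directions of the biconditional now follow. If $\bar P \in \Cplus \cap \li \setminus \{\bar T\}$, then uniqueness forces $\bar P = \bar L$, so the corresponding $P = L = R$ lies in $\Cstar \cap g$, and $P^q \in \Cstar \cap g^q$. Conversely, if $\Cstar$ meets $g, g^q$ at $P, P^q$, then by uniqueness of $R$ we get $P = R = L$, and the correspondence yields $\bar P = \bar L \in \Cplus \cap \li$ with $\bar P \neq \bar T$. There is no substantial conceptual obstacle; the only delicate point is confirming that $\PiCstar \cap g$ is a single point rather than all of $g$, which uses the Frobenius-stability of $\PiC$, and the rest is bookkeeping drawn from the two previous proofs.
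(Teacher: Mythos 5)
Your proposal is correct and follows essentially the same route as the paper: the paper derives the corollary directly from the proof of Theorem~\ref{thm-tgt-conic-T-1}, where the identification $g\cap\PiCstar=g\cap\Pstar=P$ (with $\bar P$ the vertex of the Baer pencil, located on $\Cplus\cap\li$ by Theorem~\ref{cor:tgt-baby}) is exactly the key step you reconstruct. Your explicit verification that $g\not\subset\PiCstar$, so that $\PiCstar\cap g$ is the single point $L$, is a correct and slightly more careful spelling-out of what the paper leaves implicit.
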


\subsubsection{Conics of $\B$ not containing the point $\bar T=\B\cap\li$}

We now look at an $\Fq$-conic $\C$ in a tangent Baer subplane $\B$, with $\B\cap\li$ not in $\C$. The $\Fqq$-conic {\rm $\Cplus$}  meets $\li$ in two distinct points (which may lie in $\PG(2,q^4)$). We show that if these two points lie in $\PG(2,q^2)$, then $[\C]$ is a $g$-special normal rational curve. Further, if the two points lie in the quadratic  extension of $\PG(2,q^2)$ to $\PG(2,q^4)$, then $[\C]$ is an $\gstar$-special normal rational curve.

\begin{theorem}\Label{smiley-conic}
In $\PG(2,q^2)$, $q>7$, let $\B$ be a Baer subplane tangent to $\li$ with $\bar T=\B\cap\li$. Let $\C$ be an $\Fq$-conic in $\B$, $\bar T\notin\C$. In $\PG(4,q)$, $[\C]$ is a $g$-special or $\gstar$-special 4-dimensional normal rational curve. 
\end{theorem}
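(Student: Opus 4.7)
The plan is to bootstrap from Result~\ref{cath-conic}(3), which identifies $[\C]$ as a $4$-dimensional normal rational curve lying on the ruled cubic surface $[\B]$, and then to exploit the structural analysis of $[\Cplus]$ at infinity developed in Section~\ref{sec:adult-conic}. Since $\bar T\notin\C$ and $\B\cap\li=\{\bar T\}$, the conic $\C$ is disjoint from $\li$, so $[\C]\cap\si=\emptyset$; combined with $\C\subset\Cplus$ and $\C\subset\B$ this gives the set-theoretic inclusions $[\C]\subseteq[\Cplus]$ and $[\C]\subseteq[\B]$ with no complication at infinity.

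Next I would transfer these inclusions to the quadratic and quartic extensions. Writing $[\Cplus]=\Q_\infty\cap\Q_0$ and $[\B]$ as an intersection of three quadrics, Lemma~\ref{lem:nrc-extn} (applicable since $q>7$) gives $\Cstar\subseteq\Cplusstar\cap\Bstar$ in $\PG(4,q^2)$; the polynomial argument in its proof transfers verbatim to yield $\Cstarstar\subseteq\Cplusstarstar\cap\Bstarstar$ in $\PG(4,q^4)$. By Result~\ref{BB-Baer}(5), $[\B]$ is a $g$-special ruled cubic surface, so its line directrix $\Tstar$ together with the two generators $g, g^q$ all lie in $\sistar$, and Result~\ref{3-space-meets-ruled} then forces $\Bstar\cap\sistar=\Tstar\cup g\cup g^q$ (with the analogous statement in $\PG(4,q^4)$).

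The argument then splits into three cases according to how $\Cplus$ meets $\li$: secant at distinct ${{\mathbb F}_{q^2}}$-points $\bar P,\bar Q$; tangent at a single ${{\mathbb F}_{q^2}}$-point $\bar P$; or exterior over ${{\mathbb F}_{q^2}}$, meeting $\li$ in conjugate points $\bar P,\bar P^{q^2}\in\PG(2,q^4)\setminus\PG(2,q^2)$. In each case Theorem~\ref{thm:Ccapsi} describes $\Cplusstar\cap\sistar$ (or $\Cplusstarstar\cap\sistarstar$) as a small union of lines in the hyperbolic congruence of $g,g^q$. Intersecting with $\Tstar\cup g\cup g^q$ (and using that distinct lines of the congruence meet only on $g\cup g^q$) confines $\Cstar\cap\sistar$, respectively $\Cstarstar\cap\sistarstar$, to at most four points on $g\cup g^q$, respectively on $\gstar\cup\gqstar$. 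Since a $4$-dimensional normal rational curve is not contained in $\sistar$ and has degree four, its intersection divisor with $\sistar$ has degree four, forcing the candidate points to exhaust the intersection with the correct multiplicities. This produces two points of $g$ and two of $g^q$ in the secant case, $P$ and $P^q$ each doubled in the tangent case, and $\{P,P^{q^2}\}\subset\gstar\setminus g$ together with $\{P^q,P^{q^3}\}\subset\gqstar\setminus g^q$ in the exterior case; Corollary~\ref{cor:PcorrPsigma} confirms the identification of these points with those of $\Cplus\cap\li$.

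The main obstacle is the tangent case, where the candidate set on $g\cup g^q$ collapses to $\{P,P^q\}$ and the degree-four intersection must be absorbed entirely by multiplicity; the definition of $g$-special admits repeated points, which accommodates this, but some care is needed to verify, via the pencil $\{\Q_t\}$ of Section~\ref{sec:adult-conic}, that the intersection divisor is $2P+2P^q$ rather than a degenerate distribution on a single point of $\Tstar$. A secondary concern is checking that the polynomial argument of Lemma~\ref{lem:nrc-extn} really does transfer to $\PG(4,q^4)$: once the polynomial $f$ has been shown identically zero over $\Fq$, it vanishes on $P_\theta$ for every $\theta$ in any extension of $\Fq$, so the extension step is free.
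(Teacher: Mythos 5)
Your proposal is correct and follows essentially the same route as the paper: Result~\ref{cath-conic}(3) for the curve on the ruled cubic surface, Lemma~\ref{lem:nrc-extn} to push the inclusions $[\C]\subseteq[\Cplus]$ and $[\C]\subseteq[\B]$ into the quadratic and quartic extensions, the computation $\Bstar\cap\sistar=\{\Tstar,g,g^q\}$ via Results~\ref{BB-Baer}(5) and~\ref{3-space-meets-ruled}, and then the three-case intersection with $\Cplusstar\cap\sistar$ from Theorem~\ref{thm:Ccapsi} together with the degree-four count at infinity. Your extra care in the tangent case (ruling out a degenerate distribution on $\Tstar$) is handled in the paper by Corollary~\ref{adult-conic-T} and the disjointness of distinct extended spread lines, but it is the same argument in substance.
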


\begin{proof} 
Let $\C$ be an $\Fq$-conic in $\B$ not through $\bar T=\B\cap\li$, and consider the $\Fqq$-conic $\Cplus$ containing $\C$.
Then either
(i) $\Cplus$ is secant to $\li$ and $\Cplus\cap\li$ is two distinct points $\bar P,\, \bar Q$; (ii) $\Cplus$ is tangent to $\li$ and $\Cplus\cap\li$ is a repeated point $\bar P=\bar Q$;  or (iii) $\Cplus$ is exterior to $\li$ and in $\PG(2,q^4)$, the extension of $\Cplus$ meets the extension of $\li$ in two points  $\bar P,\, \bar Q$ which are conjugate with respect to this  extension from $\PG(2,q^2)$ to $\PG(2,q^4)$, that is,  $\bar Q=\bar P^{q^2}$.  
By Result~\ref{cath-conic}, as $\bar T\notin\C$, in $\PG(4,q)$, $[\C]$ is a 4-dimensional normal rational curve lying on the $g$-special ruled cubic surface $[\B]$, and $[\C]$ does not meet $\si$. Thus it remains to show that in $\PG(4,q)$ $[\C]$ is a $g$-special or $\gstar$-special.
We will show that in an appropriate extension of $\PG(4,q)$, the extension of the  normal rational curve $[\C]$ contains the  points $\Pbar, \Qbar$ of the (possibly extended) transversal $g$, giving the $g$-special property.
Recall that a 4-dimensional 
normal rational curve of $\PG(4,q)$ meets the $3$-space $\si$ in four points, possibly repeated or in an extension. As $[\C]$ is disjoint from $\si$, either (a) in $\PG(4,q^2)$, $\Cstar$ meets $\sistar$ in four points of the form $X,X^q,Y,Y^q$, possibly $X=Y$; or (b) in $\PG(4,q^4)$,  $\Cstarstar$ meets $\sistarstar$ in four points of form $X,X^q,X^{q^2},X^{q^3}$.

 
In $\PG(2,q^2)$, we have $\C\subset\Cplus$, so as discussed in Remark~\ref{remark-subconic-conic}, in $\PG(4,q)$,  $[\C]\subset[\Cplus]$. By \cite[Cor 3.3]{BJQ},  $[\Cplus]$  is the exact intersection of two quadrics, so by Lemma~\ref{lem:nrc-extn}: in $\PG(4,q^2)$, $\Cstar\subset \Cplusstar$; and in $\PG(4,q^4)$, $\Cstarstar\subset \Cplusstarstar$. Similarly, as $[\C]\subset [\B]$   and $[\B]$ is the intersection of three quadrics \cite{FFA}, by Lemma~\ref{lem:nrc-extn},
 $\Cstar\subset\Bstar$ and $\Cstarstar\subset\Bstarstar$.
 In $\PG(2,q^2)$, we have $\C=\B\cap\Cplus$. As $[\C]$ is disjoint from $\si$, in $\PG(4,q)$, we have $[\C]=[\B]\cap[\Cplus]$.
 We need to determine  $\Cstar\cap\sistar= \Cplusstar\cap\Bstar\cap\sistar$  in $\PG(4,q^2)$, and  
 $\Cstarstar\cap\sistarstar= \Cplusstarstar\cap\Bstarstar\cap\sistarstar$ in $\PG(4,q^4)$.
 
 First we determine $\Bstar\cap\sistar$ and $\Bstarstar\cap\sistarstar$. In $\PG(2,q^2)$,  $\bar T\in \B$, so in $\PG(4,q)$, $[T]\subset[\B]$, and $[\B]\cap\si=[T]$. Hence in $\PG(4,q^2)$, $\Tstar\subset\Bstar$, and  in $\PG(4,q^4)$, $\Tstarstar\subset\Bstarstar$.
 By Result~\ref{BB-Baer},  $[\B]$ is a $g$-special ruled cubic surface, so the transversal lines $g,g^q$ lie in $\Bstar$. That is,  $\{\Tstar,g,g^q\}$ lie in $\Bstar$, and using Result~\ref{3-space-meets-ruled} in $\PG(4,q^2)$,   the 3-space $\sistar$ meets the ruled cubic surface $
\Bstar$ in exactly these three lines, so 
$\Bstar\cap\sistar= \{\Tstar,g,g^q\}$. Similarly, in $\PG(4,q^4)$, the 3-space $\sistarstar$ meets the ruled cubic surface $
\Bstarstar$ in the three lines $\{\Tstarstar,\gstar,\gqstar\}$.

Recall that Theorem~\ref{thm:Ccapsi} determines the  intersection $\Cplusstar\cap\sistar$ and $\Cplusstarstar\cap\sistarstar$ for  the three cases where {\rm $\Cplus$}  is (i) secant, (ii) tangent or (iii) exterior to $\li$  in $\PG(2,q^2)$. 
 For each case we determine $\Cplusstar\cap\Bstar\cap\sistar$ in $\PG(4,q^2)$ 
 and  $\Cplusstarstar\cap\Bstarstar\cap\sistarstar$ in $\PG(4,q^2)$.

 In case (i), {\rm $\Cplus$}  is secant to $\li$, so by Theorem~\ref{thm:Ccapsi}, $\Cplusstar\cap\sistar=\{\Pstar,\Qptstar,\Pbar \Qqbar,\ \Pqbar \Qbar\}$. 
Now  
 $\Bstar\cap\sistar= \{\Tstar,g,g^q\}$, and by Corollary~\ref{adult-conic-T}, $\Cplusstar\cap\sistar$ does not meet $\Tstar$. Hence
  $\Cplusstar\cap\Bstar\cap\sistar$
 consists of the four points $\Pbar,\Qbar,\Pqbar,\Qqbar$. Similarly, $\Cplusstarstar\cap\Bstarstar\cap\sistarstar=\{\Pbar,\Qbar, \Pqbar,\Qqbar\}$.
 As $\Cstar\cap\sistar= \Cplusstar\cap\Bstar\cap\sistar$, $\Cstar$ meets $g$ in two distinct points, namely $P,Q$, and so $[\C]$ is a $g$-special normal rational curve. 
 
 In case (ii), {\rm $\Cplus$}  is tangent to $\li$, so by Theorem~\ref{thm:Ccapsi}, $\big\{\Cplusstar\cap\sistar\big\}\cap\big\{\Bstar\cap\sistar\big\}=\{\Pstar\}\ \cap\ \{\Tstar,g,g^q\}=\{\Pbar,\Pqbar\}$. Similarly, $\Cplusstarstar\cap\Bstarstar\cap\sistarstar=\{\Pbar,\Pqbar\}$.
 Hence 
  $\Cstar$ meets $g$ in the repeated point $\Pbar$,  and so $[\C]$ is a $g$-special normal rational curve. 
  
  In case (iii), {\rm $\Cplus$}  is exterior to $\li$, so  in $\PG(2,q^4)$, the extension of $\Cplus$ meets the extension of $\li$ in two points  $\bar P,\, \bar Q$, where $\bar Q=\bar P^{q^2}$.  
 By Theorem~\ref{thm:Ccapsi}, 
  $\Cplusstar\cap\sistar=\emptyset$ and  $\Cplusstarstar\cap\sistarstar=\{\elllP,\elllP^q,\elllP^{q^2},\elllP^{q^3}\}$, where $\elllP=\ellP$. Hence $\Cplusstar\cap\Bstar\cap\sistar=\emptyset$, and $\Cplusstarstar\cap\Bstarstar\cap\sistarstar=\{\Pbar,\Pqbar,\Pbar^{q^2},\Pbar^{q^3}\}$.
So in this case the normal rational curve $[\C]$  meets $\si$ in four points over  ${{\mathbb F}_{q^4}}$. As $\Cstarstar$ meets $\gstar$ in two points (namely $\Pbar$ and  $\Pbar^{q^2}=\Qbar$)  $[\C]$ is an $\gstar$-special normal rational curve.
\end{proof}

We now show that conversely,  every $g$-special or $\gstar$-special normal rational curve corresponds to an $\Fq$-conic.

%

\begin{theorem}\Label{4nrc-is-baby-1}
 Let $\N$ be a $g$-special or $\gstar$-special 4-dimensional normal rational curve in $\PG(4,q)$. Then $\N=[\C]$ where   
$\C$ is an $\Fq$-conic in a tangent Baer subplane of $\PG(2,q^2)$.  
\end{theorem}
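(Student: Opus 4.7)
The plan is to adapt the approach of Theorem~\ref{conv-tgt} to the $4$-dimensional setting. Since $\Nstar$ meets $\sistar$ only on the transversals $g, g^q$ (or on $\gstar, \gqstar$ in the $\gstar$-special case), both of which are disjoint from $\si$, the curve $\N$ has no points on $\si$ and so consists of $q+1$ affine points. Write $\{P, Q\} = \Nstar \cap g$ in the $g$-special case (with $P=Q$ allowed), or $\{P, Q\} = \Nstarstar \cap \gstar$ in the $\gstar$-special case, and let $\bar P, \bar Q$ be the corresponding points of $\li$ (respectively of $\PG(2, q^4) \setminus \PG(2, q^2)$).

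I first pick four affine points $[A], [B], [C], [D]$ of $\N$. Since $\N$ is a $4$-dimensional NRC, any four of its points are in general position: no three collinear, no four coplanar, so they span a $3$-space of $\PG(4, q)$. The four corresponding points $A, B, C, D$ of $\PG(2, q^2)$ therefore form a quadrangle and lie in a unique Baer subplane $\B$. This subplane must be tangent to $\li$: if $\B$ were secant, then $[\B]$ would be a plane of $\PG(4, q)$ containing $[A], [B], [C], [D]$, contradicting that these span a $3$-space. Hence $\B$ is tangent to $\li$ at some point $\bar T_0 = \B \cap \li$, and by Result~\ref{BB-Baer}, $[\B]$ is a $g$-special ruled cubic surface.

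The crux of the argument, and the main obstacle, is to show $\N \subset [\B]$. In the extension to $\PG(4, q^2)$ (or $\PG(4, q^4)$), the surface $\Bstar$ (respectively $\Bstarstar$) contains both transversals as generator lines, and therefore contains the four infinite points $\{P, Q, P^q, Q^q\}$ of $\Nstar \cap \sistar$. Together with the four shared affine points $[A], [B], [C], [D]$, the extended curve $\Nstar$ and the extended surface $\Bstar$ share at least eight points. Since $[\B]$ is the exact intersection of three quadrics \cite{FFA}, it suffices to show $\N$ lies in each defining quadric $\Q_i$. A B\'ezout-style argument gives that $\Nstar$ (of degree $4$) and $\Q_i\star$ (of degree $2$) meet in at most $\deg \Nstar \cdot \deg \Q_i\star = 8$ points unless $\Nstar \subset \Q_i\star$, so I would exhibit a ninth common point---either by producing an additional affine point of $\N$ lying in $[\B]$ via a Baer-subplane-uniqueness argument with a fifth chosen point of $\N$, or by establishing tangential contact of $\Nstar$ with $\Q_i\star$ at one of the infinite points on $g$ or $g^q$. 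This forces $\Nstar \subset \Q_i\star$ for every defining quadric, and hence $\N \subset [\B]$.

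Once $\N \subset [\B]$, any five affine points of $\N$ lie in $\B$ and are in general position, so they determine a unique non-degenerate conic $\C$ of $\B \cong \PG(2, q)$. The point $\bar T_0$ cannot lie on $\C$, for otherwise Result~\ref{cath-conic}(2) would make $[\C]$ a twisted cubic rather than a $4$-dimensional NRC. By Result~\ref{cath-conic}(3), $[\C]$ is then a $4$-dimensional NRC on $[\B]$, and since $[\C]$ and $\N$ share five points in general position and $4$-dimensional NRCs are determined by five general points, we conclude $\N = [\C]$ with $\C$ an $\Fq$-conic in the tangent Baer subplane $\B$.
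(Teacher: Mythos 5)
Your route is genuinely different from the paper's, and in its present form it has two real gaps, the first of which you yourself flag as ``the main obstacle'' and then do not overcome. The paper never attempts to prove $\N\subset[\B]$ directly. It takes only \emph{three} affine points $[A],[B],[C]$ of $\N$, shows the plane they span contains no spread line (so that $A,B,C$ are non-collinear in $\PG(2,q^2)$ and determine a secant Baer subplane $\B_\alpha$ avoiding $\bar P,\bar Q$), constructs in $\PG(2,q^2)$ the unique $\Fqq$-conic through $\bar P,\bar Q,A,B,C$ and, via Lemma~\ref{adult-baby}, the unique $\Fq$-conic $\C$ through $A,B,C$ inside it, rules out $\B=\B_\alpha$ so that the Baer subplane of $\C$ is tangent, and then invokes the already-proved forward direction (Theorem~\ref{smiley-conic}) to conclude that $\Cstar$ contains the seven points $[A],[B],[C],P,P^q,Q,Q^q$; since seven points in general position determine a unique $4$-dimensional normal rational curve, $\N=[\C]$. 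Your plan instead needs $\N\subset[\B]$ from scratch. You correctly count eight common points of $\Nstar$ and $\Bstar$ and correctly observe that B\'ezout against each defining quadric of $[\B]$ demands a ninth, but neither proposed source of a ninth point is carried out: taking a fifth point $[E]$ of $\N$ and arguing ``by Baer-subplane uniqueness'' that $E\in\B$ presupposes that $A,B,C,D,E$ already lie in a common Baer subplane, which is essentially the containment you are trying to prove; and tangential contact of $\Nstar$ with each quadric at a point of $g$ is asserted as a possibility, not established.

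Two further problems. First, your closing step is incorrect as stated: a $4$-dimensional normal rational curve of $\PG(4,q)$ is determined by $4+3=7$ points in general position, not by $5$, so even granting $\N\subset[\B]$, the fact that $\N$ and $[\C]$ share five affine points does not force $\N=[\C]$. (The paper reaches seven by adjoining the four points at infinity on the transversals, which it knows lie on $\Cstar$ from Theorem~\ref{smiley-conic}.) Second, ``no three of $[A],[B],[C],[D]$ are collinear in $\PG(4,q)$'' does not by itself give that $A,B,C,D$ form a quadrangle in $\PG(2,q^2)$: three of them are collinear downstairs exactly when the plane they span contains a spread line, and excluding this requires the paper's argument that such a plane together with $g$ would span a $3$-space of $\PG(4,q^2)$ meeting $\Nstar$ in five points. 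This last point is minor and fixable; the missing ninth intersection point and the five-versus-seven count are not.
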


\begin{proof} Let $\N$ be a $g$-special 4-dimensional normal rational curve in $\PG(4,q)$. So there are two spread lines $[P]$, $[Q]$ (possibly equal) such that $\Nstar\cap\sistar$ consists of the four points $P=g\cap\Pstar$, $P^q=g^q\cap\Pstar$, $Q=g\cap\Qptstar$, $Q^q=g^q\cap\Qptstar$. Note that as $\Nstar$ meets $\sistar\setminus\si$ in four points, $\N$ is disjoint from $\si$. There are three cases to consider.

Case (i), suppose first that $[P]\neq [Q]$. 
Let $[A],[B],[C]$ be three points  of $\N$, so $[A],[B],[C]\notin\si$. If the plane $\alpha=\langle  [A],[B],[C] \rangle $ contained a point of the spread line  $[P]$, then the $3$-space $\langle \alpha,[P] \rangle\star $ contains five points of $\Nstar$, namely $[A],[B],[C],P,P^q$, a contradiction. So   $\alpha $ is disjoint from the spread lines $[P]$ and $[Q]$.
If $\alpha$ contained a spread line $[X]$, then in $\PG(4,q^2)$,  $\langle \alphastar,g\rangle$ is a 3-space that contains five points of $\Nstar$, namely $[A],[B],[C],P,Q $, a contradiction. So $\alpha$ corresponds to a Baer subplane $\B_\alpha$ of $\PG(2,q^2)$ that is secant to $\li$, and does not contain $\bar P$ or $\bar Q$. 

Consider the corresponding points $\bar P,\, \bar Q,A,B,C$ in $\PG(2,q^2)$. So $\bar P,\, \bar Q\in\li$ and $A,B,C\in\PG(2,q^2)\setminus\li$. Now $A,B,C$ are not collinear as $\alpha$ does not contain a spread line. So  $\B_\alpha$ is the unique Baer subplane that contains $A,B,C$ and is secant to $\li$.  As $\bar P,\, \bar Q\in\li\setminus\B$ and $A,B,C\in\B\setminus\li$,  no three of $\bar P,\, \bar Q,A,B,C$ are collinear, hence they lie on a unique $\Fqq$-conic {\rm $\Cplus$} . By Lemma~\ref{adult-baby}, $A,B,C$ lie in a unique $\Fq$-conic $\C$ contained in {\rm $\Cplus$}, and $\C$ lies in a Baer subplane $\B$. 

Suppose $\B=\B_\alpha$, then by Corollary~\ref{cor:Baerplane-trans}, in $\PG(4,q^2)$, the plane $\alphastar$ meets $\gPQ$. Note that the line $\gPQ$ contains two points  of $\Nstar$, namely  $P,Q^q$. Hence $\langle \alphastar,\gPQ\rangle$ is a 3-space of $\PG(4,q^2)$ that contains five points of $\Nstar$, namely $[A],[B],[C],P, Q^q$, a contradiction. 
Thus $\B\neq\B_\alpha$.

Hence 
 the Baer subplane $\B$ is tangent to $\li$.  As $\Cplus$ is secant to $\li$, we are in case (i) of the proof of Theorem~\ref{smiley-conic}, hence in $\PG(4,q)$, $[\C]$ is a $g$-special 4-dimensional normal rational curve and $\Cstar$ contains the seven points $A,B,C, 
P, P^q, Q, Q^q$. As seven points lie on a unique 4-dimensional normal rational curve, we have  $\Nstar=\Cstar$ and so $\N=[\C]$.
That is, the normal rational curve $\N$ corresponds in $\PG(2,q^2)$  to an $\Fq$-conic $\C$ in the tangent Baer subplane $\B$ as required.

Case (ii), suppose  $[P]= [Q]$, the proof is very similar to case (i). Let $\N$ be a 4-dimensional normal rational curve of $\PG(4,q)$ such that $\N\cap\si=\emptyset$, and $\Nstar\cap\sistar$ consists of two repeated points $P,P^q$.  
Let $[A],[B],[C]\in\N$ and $\alpha=\langle  [A],[B],[C] \rangle$. Similar to case (i), $\alpha$ corresponds to a Baer subplane $\B_\alpha$ of $\PG(2,q^2)$ that is secant to $\li$, and does not contain $\bar P$. 
The points $\bar P,A,B,C$ lie in a unique $\Fqq$-conic $\Cplus$ that is tangent to $\li$ at $\bar P$. By Lemma~\ref{adult-baby}, $A,B,C$ lie in a unique $\Fq$-conic $\C$ contained in {\rm $\Cplus$}, and $\C$ lies in a Baer subplane $\B$. 
If $\B=\B_\alpha$, then
$\bar P\notin\C$, and so $\Cplus$ meets $\li$ in two points, a contradiction. Hence $\B\neq\B_\alpha$ and $\B$ is tangent to $\li$.  As $\Cplus$ is tangent to $\li$, we are in case (ii) of the proof of Theorem~\ref{smiley-conic}, hence in $\PG(4,q)$, $[\C]$ is a $g$-special 4-dimensional normal rational curve  containing $A,B,C$, and 
$\Cstar$ meets $\sistar$ twice at $
P$ and twice at $P^q$. These conditions define a unique normal rational curve of $\PG(4,q^2)$,  and so $\N=\C$ as required.

Case (iii), suppose $\N$ is an $\gstar$-special 4-dimensional normal rational curve. 
As $\N$ is a normal rational curve over ${{\mathbb F}_q}$, $\N$ meets $\sistarstar\setminus\sistar$ in four points which are conjugate with respect to the map $x\mapsto x^q$, $x\in{{\mathbb F}_q}$. That is, points of form $X,X^q,X^{q^2},X^{q^3}$ with $X,X^{q^2}\in \gstar$ and $X^q,X^{q^3}\in \gqstar$.
 Recalling the 1-1 correspondence between points of $\gstar$ and points of the quadratic extension of $\li$ to 
 $\PG(2,q^4)$,  there are points $\bar P,\, \bar Q$ on the  quadratic extension of $\li$ such that $ P =X$, $ Q=X^{q^2}$. 
The argument of case (i)  now generalises by working in 
the quadratic extension of $\PG(2,q^2)$ to $\PG(2,q^4)$; and the quartic extension of $\PG(4,q)$ to $\PG(4,q^4)$. 
\end{proof}

Moreover, the proofs of Theorem~\ref{smiley-conic},~\ref{4nrc-is-baby-1}   show that the normal rational curve corresponding to an $\Fq$-conic $\C$ meets the transversal $g$ of the regular spread $\S$  in points corresponding to the points $\Cplus\cap\li$. The three cases when {\rm $\Cplus$}  is tangent, secant or exterior to $\li$ are summarised in the next result.

\begin{theorem}\Label{baby-not-T-part2}
In $\PG(2,q^2)$, $q>7$, let $\B$ be a Baer subplane tangent to $\li$. Let $\C$ be an $\Fq$-conic in $\B$ with  $\B\cap\li\notin\C$, so $[\C]$ is a 4-dimensional normal rational curve. The $\Fqq$-conic  {\rm $\Cplus$}  meets $\li$ in two  points denoted $\bar P,\, \bar Q$, possibly equal or in an extension. The three possibilities when $\Cplus$ is tangent, secant or exterior to $\li$ are as follows.
\begin{enumerate}
\item $\bar P=\bar Q$ if and only if,  in $\PG(4,q^2)$, $\Cstar$ meets the transversal $g$ of $\S$ in the  point $P$. 
\item $\bar P,\, \bar Q\in\li$ if and only if,  in $\PG(4,q^2)$,  $\Cstar$ meets the transversal $g$ of $\S$ in the two  points $P,Q$. 
\item $\bar P,\, \bar Q$ lie in the extension $\PG(2,q^4)$ if and only if,  in  $\PG(4,q^4)$,   $\Cstarstar$ meets the extended transversal $\gstar$  in the two  points $P$, $Q$. 
\end{enumerate}
\end{theorem}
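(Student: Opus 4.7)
The plan is to observe that this theorem consolidates case analyses already carried out in the proofs of Theorem~\ref{smiley-conic} and Theorem~\ref{4nrc-is-baby-1}, so no fundamentally new argument is required; the task is to extract the three biconditionals cleanly.

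For the three forward implications, I would proceed case by case in parallel with the proof of Theorem~\ref{smiley-conic}. In each case the core identity is $\Cstar = \Bstar \cap \Cplusstar$ (valid since $[\C]$ is disjoint from $\si$, together with the extension results $\Cstar \subset \Bstar$ and $\Cstar \subset \Cplusstar$ via Lemma~\ref{lem:nrc-extn} applied to the quadric presentations of $[\B]$ and $[\Cplus]$). Therefore the intersection with the hyperplane at infinity decomposes as
\[
\Cstar \cap \sistar \;=\; \bigl(\Cplusstar \cap \sistar\bigr) \cap \bigl(\Bstar \cap \sistar\bigr),
\]
and similarly over $\mathbb F_{q^4}$. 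The factor $\Bstar \cap \sistar$ is always $\{\Tstar, g, g^q\}$ by the $g$-specialness of the ruled cubic $[\B]$ together with Result~\ref{3-space-meets-ruled}; the factor $\Cplusstar \cap \sistar$ is read off from Theorem~\ref{thm:Ccapsi} in the three sub-cases (tangent, secant, exterior) for $\Cplus$. Finally, Corollary~\ref{adult-conic-T} eliminates any contribution along $\Tstar$ (since $\bar T \notin \Cplus$), and the correspondence $\bar P \leftrightarrow P \in g$ from Section~\ref{sec:background-coord}, extended to $\gstar$ via Section~\ref{sec:notn-bar} for the exterior case, translates the intersection points on the transversal back to points of $\li$ (or its quadratic extension). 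This delivers in turn: one repeated point $P$ on $g$ when $\Cplus$ is tangent; two distinct points $P,Q$ on $g$ when $\Cplus$ is secant; and two points $P,Q$ on $\gstar \setminus g$ when $\Cplus$ is exterior.

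For the reverse implications I would simply invoke exhaustion. The three conditions on $\Cplus$ (tangent, secant, exterior to $\li$) form a mutually exclusive and exhaustive trichotomy, and the three intersection patterns on the right-hand sides of items 1, 2, 3 are also pairwise mutually exclusive (yielding one, two, or zero points of $g$ respectively, counting without multiplicity). Hence the three ``only if'' directions collectively imply the three ``if'' directions. Alternatively, one could cite Theorem~\ref{4nrc-is-baby-1}, which already performs the converse construction.

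The only real subtlety is the exterior case, where one must identify the conjugate pair $\{\bar P, \bar P^{q^2}\} \subset \PG(2,q^4)$ with the conjugate pair $\{P, P^{q^2}\}$ on $\gstar \setminus g$ in the quartic extension, and verify that the conjugation coming from $\mathbb F_{q^4}/\mathbb F_{q^2}$ matches the Frobenius behaviour of $\gstar$. This is handled by the 1--1 correspondence set up in Section~\ref{sec:notn-bar}, which is why the statement restricts to $q > 7$ (the hypothesis of Lemma~\ref{lem:nrc-extn}); once that correspondence is in place the argument is essentially a matter of bookkeeping across the field extensions.
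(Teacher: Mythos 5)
Your proposal is correct and follows essentially the same route as the paper, which presents this theorem as a summary of the case analyses already carried out in the proofs of Theorems~\ref{smiley-conic} and~\ref{4nrc-is-baby-1} (forward directions from the three cases of Theorem~\ref{smiley-conic} via $\Cstar\cap\sistar=(\Cplusstar\cap\sistar)\cap(\Bstar\cap\sistar)$ together with Theorem~\ref{thm:Ccapsi} and Corollary~\ref{adult-conic-T}; converses by the mutual exclusivity of the three intersection patterns). Your explicit exhaustion argument for the reverse implications is a sensible way of making precise what the paper leaves implicit.
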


\bigskip\bigskip

{\bfseries Author information}

S.G. Barwick. School of Mathematical Sciences, University of Adelaide, Adelaide, 5005, Australia.
susan.barwick@adelaide.edu.au

W.-A. Jackson. School of Mathematical Sciences, University of Adelaide, Adelaide, 5005, Australia.
wen.jackson@adelaide.edu.au

P. Wild. Royal Holloway, University of London, TW20 0EX, UK. peterrwild@gmail.com

\end{document}